\newcommand{\eq}[1]{\begin{align}#1\end{align}}
\newcommand{\eqb}[1]{\begin{align}\begin{aligned}#1\end{aligned}\end{align}}
\newtheorem{theorem}{Theorem}[section]
\newtheorem*{theorem*}{Theorem}
\newtheorem{definition}[theorem]{Definition}
\newtheorem{proposition}[theorem]{Proposition}
\newtheorem{corollary}[theorem]{Corollary}
\newtheorem{lemma}[theorem]{Lemma}
\newtheorem{remark}[theorem]{Remark}
\DeclareMathOperator{\trace}{Tr} 
\DeclareMathOperator{\cat}{Cat}
\begin{document}

\date{\today}

\title{Enumerating meandric systems with large number of loops}

\author{Motohisa Fukuda}
\address{MF: Yamagata University, 1-4-12 Kojirakawa, Yamagata, 990-8560 Japan}
\email{fukuda@sci.kj.yamagata-u.ac.jp}

\author{Ion Nechita}
\address{IN: Zentrum Mathematik, M5, Technische Universit\"at M\"unchen, Boltzmannstrasse 3, 85748 Garching, Germany
and CNRS, Laboratoire de Physique Th\'{e}orique, IRSAMC, Universit\'{e} de Toulouse, UPS, F-31062 Toulouse, France}
\email{nechita@irsamc.ups-tlse.fr}

\subjclass[2000]{}
\keywords{}

\begin{abstract}
We investigate meandric systems with large number of loops using tools inspired by free probability.
For any fixed integer $r$, we express the generating function of meandric systems on $2n$ points with $n-r$ loops in terms of a finite (the size depends on $r$) subclass of irreducible meandric systems,
via the moment-cumulant formula from free probability theory. 
We show that the generating function, after an appropriate change of variable, is a rational function, and we bound its degree.  Exact expressions for the generating functions are obtained for $r \leq 6$, as well as the asymptotic behavior of the meandric numbers for general $r$. 
\end{abstract}

\maketitle

\tableofcontents

\section{Introduction}
In this paper we consider the problem of enumerating meandric systems, which are a natural generalizations of meanders. This problem falls into the category of enumerating some non-crossing diagrams, and it has received a lot of interest from the mathematics and physics communities. An excellent reference providing an extensive overview of the problem and its various connections to various branches of mathematics is \cite{difrancesco1997meander}.

\emph{Meandric systems} of order $n$  are defined as non-crossing closed loops which intersect a straight infinite line at $2n$ points.  
To configure all possible shapes, one can draw a horizontal line with $2n$ points and 
choose two non-crossing pair partitions on those $2n$ points for the upper and lower sides of the line so that 
connecting them gives a set of closed loops. 
In Figure \ref{fig:meander_eg}, two non-crossing parings $\{(1,2),(3,6),(4,5)\}$ and $\{(1,4),(2,3),(5,6)\}$
result in a meandric system with one loop, which is simply called a \emph{meander}. 
%These meandric systems were investigated in \cite{difrancesco1997meander} as ``arch statistics''. 
For fixed $n$, we can define $M_n^{(k)}$ to be the number of  meandric systems with $2n$ fixed points and $k$ loops. 
Computing $M_n^{(1)}$, the number of meanders, is a notoriously difficult problem, while $M_n^{(n)}$ is just the \emph{Catalan number} $\cat_n$, because such a meandric system is obtained when the same non-crossing pair partitions (or arches) are chosen for the upper and lower diagrams. The example in Figure \ref{fig:meander_eg} contributes to $M_3^{(1)}$. The problem of enumerating meanders and meandric systems is an important one, and has received a lot of attention in the last three decades \cite[Section 6]{sloane1999my}.

\begin{figure}[htbp]
  \centering
    \includegraphics[width=0.4\textwidth]{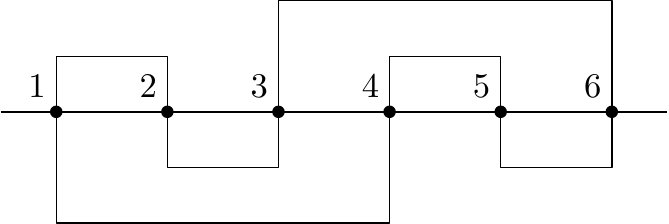}
\caption{A meander by $\{(1,2),(3,6),(4,5)\}$ and $\{(1,4),(2,3),(5,6)\}$.}
 \label{fig:meander_eg}
\end{figure}

One of main goals of research on meandric systems is to find explicit formulas for the numbers $M_n^{(k)}$ for any $k,n \in \mathbb N$. With the help of computers, these numbers (sequence \href{http://oeis.org/A008828}{A008828} in \cite{oeis2011online}) have been computed up to $n$ of order $30$, see e.g.~\cite{bobier2010fast}. 
In this paper, we focus on the formulas for $M_n^{(n-r)}$ for fixed $r\in \mathbb N$ and any $n \in \mathbb N$.
Such formulas were obtained for $0\leq r \leq 5$ in \cite{difrancesco1997meander}, where the authors claimed to have proved them for $0\leq r \leq 3$. In this work, we obtain the general formula for the generating functions of these numbers, and the exact values for $r \leq 6$. Moreover, we introduce a recipe for obtaining the generating function for any given $r$, which we implement on a computer algebra system, see \cite{num}. Before moving on, let us be clear that we do not touch on what is probably considered the most important problem in the field, the enumeration of meanders, that is the numbers $M_n^{(1)}$ (sequence \href{http://oeis.org/A005315}{A005315} in \cite{oeis2011online}). 

Our approach consists in translating the problems of meandric systems  
into problems about non-crossing partitions and permutations, 
by using the bijection between non-crossing parings of $2n$ points and the so-called geodesic permutations of $n$ elements. 
These relations between meanders and permutations were investigated in \cite{sav,hal06}, and
precise and detailed discussions will be made in Section \ref{sec:M-NC}. A key concept in our analysis is the notion of \emph{irreducible meandric systems}, which were introduced in \cite{lando1993plane} (sequence \href{http://oeis.org/A006664}{A006664} in \cite{oeis2011online}); the idea of counting combinatorial objects in terms of ``irreducibles'' dates back to Beissinger \cite{beissinger1985enumeration}.  
However, we introduce the parameter $r$ in the study of irreducible meandric systems to analyze the number of loops of a meandric system. 
Recently, Nica \cite{nica2016free} also analyzed irreducible meandric systems from the point of view of free probability, but with different goals and methods than ours. 
 The focus in \cite{nica2016free} is on the set of irreducible meandric systems, independently of the parameter $r$, while $r$ plays a key role in our investigations. 
We refer the reader to the comments at the end of Section \ref{sec:M-I} for a discussion of the similarities and differences between these two papers. 

Inspired by the language of free probability, we show that the generating function of the sequence $(M_n^{(n-r)})_n$ for fixed $r$ can be obtained from that for irreducible meandric systems, through natural transformations between moments and free cumulants (Theorem \ref{thm:M-I}). 
Since  irreducible meandric systems with fixed parameter $r$ live on at most $n = 2r$ points, we can derive the generating function as described above. 
The following statement is our main result in this paper 
(see Theorem \ref{thm:meanders-generating-function} for the precise statement), providing an alternative answer to the conjecture in \cite[Equation (2.4)]{difrancesco1997meander} (see also Remark \ref{rem:rel-conj}):

\begin{theorem}\label{thm:main-informal}
Let $F_r$ be the generating function of the number of meanders on $2n$ points with $n-r$ loops 
$$F_r(t) = \sum_{n=r+1}^\infty M_n^{(n-r)}t^n.$$
Then, with the change of variables $t=w/
(1+w)^2$, the functions $F_r$ read

\begin{equation}
F_r(t) = \sum_{n=r+1}^\infty  M_n^{(n-r)} \frac{w^n}{(1+w)^{2n}} = \frac{w^{r+1}(1+w)}{(1-w)^{2r-1}} \tilde P_r(w),
\end{equation}
where $\tilde P_r(w)$ are polynomials of degree at most $3(r-1)$ (see Section \ref{sec:numerics} for the values of these polynomials up to $r=6$).
\end{theorem}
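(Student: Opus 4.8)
The plan is to take as given the moment--cumulant description of Theorem \ref{thm:M-I}, which organizes the numbers $M_n^{(n-r)}$ as moments whose free cumulants are supplied by the irreducible meandric systems, graded by the defect $r=n-k$. I would introduce a formal parameter $u$ recording the defect and work with the bivariate series $\mathcal M(t,u)=\sum_{n}\big(\sum_{r}M_n^{(n-r)}u^r\big)t^n$, so that $F_r(t)=[u^r]\mathcal M(t,u)$, together with the cumulant series $C(y,u)=\frac{1}{1-y}+\sum_{s\ge1}u^sK_s(y)$, where $K_s(y)=\sum_pI_p^{(p-s)}y^p$ gathers the irreducible systems of defect $s$. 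The crucial finiteness input, established earlier, is that an irreducible system of defect $s$ occupies at most $2s$ points; hence each $K_s$ is a bona fide polynomial in $y$, of valuation at least $s+1$ and degree at most $2s$. This is what makes every order in $u$ a finite manipulation and is the source of the eventual rationality.

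The second step is the change of variables $t=w/(1+w)^2$, which is precisely the one that rationalizes the Catalan backbone sitting in the $u=0$ layer: there $\mathcal M(t,0)=\sum_{n\ge0}\cat_n t^n$ becomes $1+w$, the inner free variable is $y_0:=t\,\mathcal M(t,0)=w/(1+w)$, and the branch point $t=1/4$ of the Catalan series is sent to $w=1$. Several combinations that recur through the computation collapse to rational expressions, most importantly
\[
\frac{1}{1-y_0}=1+w,\qquad 1-\frac{t}{(1-y_0)^2}=1-w,\qquad t\frac{d}{dt}=\frac{w(1+w)}{1-w}\,\frac{d}{dw}.
\]
The first two show that the linearization of the moment--cumulant fixed point at the $u=0$ layer has multiplier $1-w$, so each order in $u$ is solved by dividing by $1-w$; the third records that the cyclic (trace) nature of loop counting, which morally amounts to a logarithmic derivative $t\,d/dt$ of a resolvent-type series, becomes multiplication by $\frac{w(1+w)}{1-w}$ followed by $d/dw$. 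These facts already force rationality in $w$ and explain the two ingredients of the target shape: the $(1+w)$ in the numerator comes from the trace operator, and the powers of $(1-w)$ in the denominator come from the repeated linear solves together with that same operator.

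The core of the proof is then a simultaneous induction on $r$ controlling the pole order at $w=1$ and the numerator degree. For the pole order, the only mechanism that raises the singularity is the convolution (quadratic) term of the recursion, which couples the defect-$a$ and defect-$b$ data with $a+b=r$; because the accompanying prefactor $t^2/(1-y_0)^3=w^2/(1+w)$ is regular at $w=1$, the pole orders add to $(2a-1)+(2b-1)=2r-2$, and the remaining division by $1-w$ promotes this to \emph{exactly} $2r-1$, the base case $F_1=t\frac{d}{dt}[w^2]=\frac{2w^2(1+w)}{1-w}$ having pole order $1$. One must verify that the leading singular coefficient does not cancel, which is what makes the order precisely $2r-1$ rather than merely at most that. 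For the numerator, one records $K_s(y_0)=w^{s+1}Q_s(w)/(1+w)^{2s}$ with $\deg Q_s\le s-1$, and tracks the degrees of these polynomials together with the $(1+w)^3$ carried by each application of the trace operator; after extracting the prefactor $w^{r+1}(1+w)(1-w)^{-(2r-1)}$ dictated by the valuation at $w=0$ and the pole order, the remaining polynomial $\tilde P_r$ is shown to have degree at most $3(r-1)$.

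The step I expect to be the main obstacle is exactly this last bookkeeping: promoting the pole order from an inequality to the exact value $2r-1$ by ruling out cancellation of the top coefficient, and establishing the sharp numerator bound $3(r-1)$, both require a careful induction that tracks how the degrees of the finitely many irreducible polynomials $K_s$ and the accumulated trace factors combine under the convolution. A secondary but genuine difficulty is fixing the precise normalization by which the trace structure inserts the operator $t\,d/dt$ (equivalently, matching the exact statement of Theorem \ref{thm:M-I}), since it is this normalization that pins down the $(1+w)$ in the numerator and hence the exact shape asserted in Theorem \ref{thm:meanders-generating-function}; with that in hand, the remainder is the rational-function bookkeeping outlined above.
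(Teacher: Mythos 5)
Your high-level plan --- turn Theorem \ref{thm:M-I} into a functional equation, exploit the finiteness of irreducible systems (defect $r$ forces $n\le 2r$), pass to $t=w/(1+w)^2$, and induct on $r$ while tracking the pole at $w=1$ and the numerator degree --- is indeed the paper's strategy. But your setup conflates two different objects, and the equations you write are mutually inconsistent. Theorem \ref{thm:M-I} is a \emph{two-step} relation $I\mapsto K\mapsto M$: the free cumulants of the meandric series are the coefficients of $K$, \emph{not} the irreducible counts. The $K$-coefficients are not polynomials; by Proposition \ref{prop:coefficients-K} they have the form $X^{r+1}Q_{r,a,b}(X)/(1-X)^{2r+1}$, with genuine poles at $X=1$. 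The irreducible counts are polynomials (your ``crucial finiteness input''), but they are related to $M$ either by two nested $\mathcal F$-transforms, or equivalently by the \emph{quadratic} substitution $M(X)=I\bigl(X(1+M(X))^2\bigr)$ of \eqref{eq:LZ}, not by the single substitution $y=t(1+\mathcal M)$ that you use. Your own formulas betray the mix-up: the multiplier $1-t/(1-y_0)^2=1-w$ and the inner variable $y_0=w/(1+w)$ belong to the single-substitution picture, whose defect-zero cumulants are all equal to $1$ (series $y/(1-y)$) --- which is not the irreducible series at defect zero, namely $y$, contradicting your definition $K_s(y)=\sum_p I_p^{(p-s)}y^p$ at $s=0$; conversely, polynomiality of $K_s$ with $\deg K_s\le 2s$ holds only for the irreducible series, which requires the quadratic substitution (where the multiplier is $(1-w)/(1+w)$ and the inner variable is $y_0=w$). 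Either picture can be repaired into a proof --- the paper deliberately takes the two-step route to avoid the quadratic term --- but as written your order-by-order recursion does not parse.

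Second, the mechanism you invoke to ``explain'' the factor $(1+w)$ and the powers of $(1-w)$ --- a trace operator $t\,d/dt$ arising from ``the cyclic nature of loop counting'' --- has no basis in the structure. Solving the moment--cumulant fixed point order by order in $u$ never produces a differential operator in $t$ or $w$: one only encounters $y$-derivatives of the known cumulant series evaluated at $y_0$ (via Fa\`a di Bruno), products of lower-order coefficients, and one division by the linearization multiplier per order. The identity $F_1=t\frac{d}{dt}[w^2]$ is true but is an artifact, not a mechanism; in the paper the factor $(1+W)$ emerges from the bookkeeping in \eqref{eq:club-estimate} and \eqref{eq:spade-estimate}. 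Finally, the two points you yourself flag as the ``main obstacle'' are precisely the content of the theorem: the degree bound $\deg\tilde P_r\le 3(r-1)$ requires the Leibniz/Fa\`a di Bruno accounting carried out in Proposition \ref{prop:coefficients-M}, which you have not done; and the \emph{exactness} of the pole order $2r-1$ (no cancellation at $w=1$) is neither needed for the statement nor proved in the paper --- Corollary \ref{cor:asympt} explicitly \emph{assumes} $\tilde P_r(1)\ne 0$ --- so it should be dropped from the induction rather than added to it.
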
 

The paper is organized as follows. In Section \ref{sec:nc-partitions-permutations} we recall some basic properties of non-crossing partitions and permutations, which are used in Section \ref{sec:M-NC} to make the connection to meandric systems. 
In Section \ref{sec:irred} we introduce irreducible meandric systems, together with three parameters which are going to be used later for enumerating meandric systems. 
Section \ref{sec:M-I} contains the main result of the paper;
we obtained the general form of the generating function of meandric systems with large number of loops, as well as their asymptotic behavior,
after establishing their relations to irreducible meandric systems via the moment-cumulant formula. The first few exact values of the polynomials appearing in the generating functions are presented in Section \ref{sec:numerics}.
 
\bigskip

\noindent\textbf{Acknowledgements.} The authors would like to thank the anonymous reviewers for very useful remarks that helped us improve the quality of the presentation. M.F. was financially supported by the CHIST-ERA/BMBF project CQC and JSPS KAKENHI Grant Number JP16K00005. I.N.'s research has been supported by a von Humboldt fellowship and by the ANR projects {OSQPI} {2011 BS01 008 01},  {RMTQIT}  {ANR-12-IS01-0001-01}, and {StoQ} {ANR-14-CE25-0003-01}. Both authors acknowledge the hospitality of the Mathematical Physics group of the Technische Universit\"at M\"unchen, where this research was initiated. 

\section{Combinatorial aspects of non-crossing partitions and permutations}\label{sec:nc-partitions-permutations}

We gather in this section some well-known definitions and facts about non-crossing partitions and non-crossing permutations. Many of these facts are folklore, but one can follow \cite{biane1997some} or the excellent monograph \cite{nica2006lectures}. 

For a permutation $\alpha$ in the symmetric group $S_n$, we denote by $\|\alpha\|$ its length, 
that is the minimal number $m$ of transpositions $\tau_1, \ldots ,\tau_m$ which multiply to $\alpha$:
$$\|\alpha\| := \min\{m \geq 0 \, : \, \exists \, \tau_1, \ldots, \tau_m \in \mathcal S_n \text{ transpositions s.t.~}
 \alpha = \tau_1 \cdots \tau_m\}.$$
Sometimes the notation $| \cdot |$ is used for the length of permutations, but 
in our paper it stands for the cardinality of a set.
The length function $\| \cdot \|$ induces a distance on $\mathcal S_n$ by $d(\alpha, \beta) = \|\alpha^{-1}\beta\|$.
Importantly, we have
\eq{
\|\alpha\| + \#(\alpha) = n
}
where $\#(\alpha)$ is the number of cycles in $\alpha$. Note also that both $\#(\cdot)$ and $\| \cdot \|$ are class functions (with respect to conjugation) and thus, e.g.~$\|\alpha\| = \|\alpha^{-1}\|$ and $\|\alpha \beta \| = \|\beta \alpha\|$. 

We recall next the concept of non-crossing partitions. A partition $A_1 \sqcup A_2 \sqcup \cdots \sqcup A_m = \{1,2,\ldots,n\}$ is called \emph{non-crossing} if
there do not exist $a,b \in A_i$ and $c,d \in A_j$ ($i \not =j$) such that $a<c<b<d$.
An example of non-crossing partition $\{1,4,5\} \sqcup \{2,3\}$ is found in Figure \ref{fig:NC1},
and crossing $\{ 1,3\}\sqcup\{2,4,5\}$ in Figure \ref{fig:NC2}. The set of non-crossing partitions of $\{1,2,\ldots,n\}$ is denoted by $NC(n)$ or $NC(1,2,\ldots,n)$. 
We are sometime interested in $NC(n)$ restricted only to partitions with blocks of size $2$, 
and we denote by $NC_2(n)$ this subset of $NC(n)$: $NC_2(n)$ is the set of non-crossing parings and $n$ must be an even number. 

Non-crossing partitions of $\{ 1,2,\ldots,n\}$ are naturally identified to a subset of permutations in $\mathcal S_n$, called \emph{geodesic (or non-crossing) permutations}, see \cite{biane1997some} or \cite[Lecture 23]{nica2006lectures}. The bijection corresponding to this identification associates to each block of a non-crossing partition a cycle in a permutation where the elements are ordered increasingly;
the example in Figure \ref{fig:NC1} is identified to the permutation $(1,4,5)(2,3) \in \mathcal S_5$. As it was 
shown by Biane in \cite{biane1997some}, geodesic permutations are characterized by the fact that they saturate the triangle inequality; $\alpha \in \mathcal S_n$ is geodesic iff
$$\|\alpha\| + \|\alpha^{-1} \pi \| = \|\pi\| = n-1,$$
where $\pi = (1,2, \ldots n)$ is the full-cycle permutation; we say that $\alpha$ lies on the geodesic between $\mathrm{id}=(1)(2)\cdots(n)$ and $\pi=(1,2,\ldots,n)$ in $\mathcal S_n$. We shall use the identification between non-crossing partitions and geodesic permutations. We write $\alpha^{-1}$ for a non-crossing partition $\alpha \in NC(n)$;  this should be understood as the permutation $\alpha^{-1} \in \mathcal S_n$ which lies on the geodesic between $\mathrm{id}$ and $\pi^{-1} = (n,n-1,\ldots,1)$.

\begin{figure}[htbp]
\centering
\begin{minipage}{.5\textwidth}
  \centering
  \includegraphics[width=0.8\linewidth]{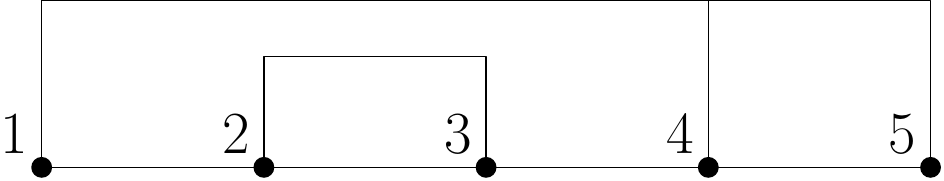}
  \captionof{figure}{A non-crossing partition.}
  \label{fig:NC1}
\end{minipage}\begin{minipage}{.5\textwidth}
  \centering
  \includegraphics[width=0.8\linewidth]{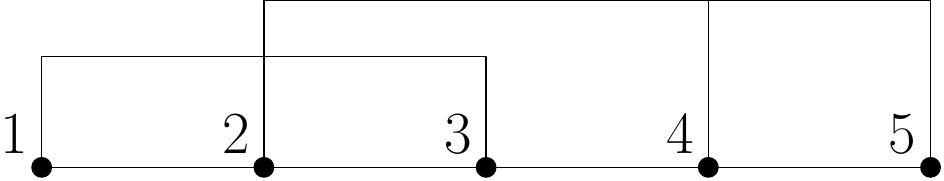}
  \captionof{figure}{A crossing partition.}
  \label{fig:NC2}
\end{minipage}
\end{figure}

Next, we recall the notion of \emph{Kreweras complement} for non-crossing partitions. 
The Kreweras complement of $\alpha \in NC(n)$ is another non-crossing partition, 
denoted $\alpha^\mathrm{Kr} \in NC(n)$, defined in the following way \cite[Definition 9.21]{nica2006lectures}. 
First, expand the domain of partitions to $\{1, \bar 1,  2,\bar2 \ldots,  n,\bar  n\}$
and
let then  $\alpha^\mathrm{Kr} \in NC(\bar 1,\bar 2, \ldots, \bar n) \cong NC(n)$ be 
the largest non-crossing partition, with respect to the partial order defined in the next paragraph, such that $\alpha \sqcup \alpha^\mathrm{Kr}$ is 
still a non-crossing partition on $\{1,\bar 1 ,2, \bar2 \ldots ,  n, \bar n\}$. 
In the 
language of geodesic permutations, given a geodesic permutation $\mathrm{id}-\alpha-\pi$, 
we define the geodesic permutation $\alpha^\mathrm{Kr} \in \mathcal S_n$ as $\alpha^\mathrm{Kr} = \alpha^{-1} \pi$ (see \cite[Remark 23.24]{nica2006lectures}). An example of Kreweras complement is found in Figure \ref{fig:Krew2}; we set $\alpha = (2,6)(3,4)$ and $\alpha^{\mathrm{Kr}}=(1,6)(2,4,5)$. Other trivial examples are $\mathrm{id}^{\mathrm{Kr}}=\pi$ and $\pi^{\mathrm{Kr}}=\mathrm{id}$.

The set $NC(n)$ is endowed with the partial order of \emph{reversed refinement}: $\alpha \leq \beta$ if every block of $\alpha$ is contained in a block of $\beta$. 
On the level of geodesic permutations, the partial order $\alpha \leq \beta$ is equivalent to $\alpha$  being on the geodesic between $\mathrm{id}$ and $\beta$: 
$$\|\alpha\| + \|\alpha^{-1} \beta\| = \|\beta\|.$$
Since $NC(n)$ is a lattice, for any $\alpha,\beta \in NC(n)$ we denote by $\alpha \wedge \beta$ the \emph{meet} of $\alpha$ and $\beta$, that is the largest element $\gamma \in NC(n)$ such that $\gamma \leq \alpha,\beta$. Similarly, we write $\alpha \vee \beta$ for the uniquely defined \emph{join} of $\alpha$ and $\beta$. Taking Kreweras complements, we have $(\alpha \wedge \beta)^\mathrm{Kr}=\alpha^\mathrm{Kr} \vee \beta^\mathrm{Kr}$ and 
$(\alpha \vee \beta)^\mathrm{Kr}=\alpha^\mathrm{Kr} \wedge \beta^\mathrm{Kr}$. The smallest element in $NC(n)$ is denoted by $0_n$ and corresponds to the partition made up of singletons, or to the identity permutation. The largest element of $NC(n)$ is denoted by $1_n$ and corresponds to the $1$-block partition, or to the permutation $\pi \in \mathcal S_n$ defined previously. 

\begin{figure}[htbp]
  \centering
    \includegraphics[width=0.6\textwidth]{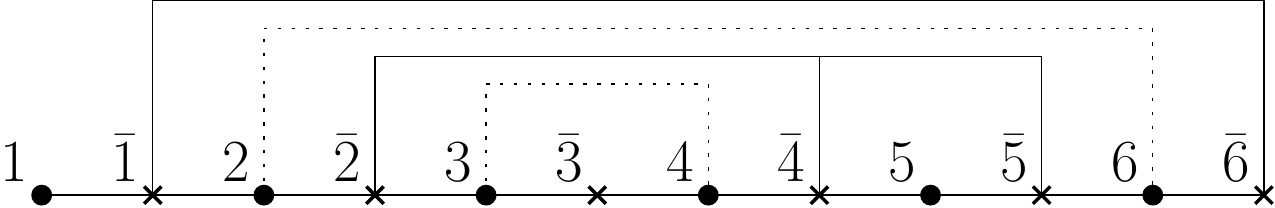}
\caption{The Kreweras complement of $(2,6)(3,4)$.}
 \label{fig:Krew2}
\end{figure}

Finally, let us discuss the well-known bijection between $NC(n)$ and $NC_2(2n)$, called \emph{fattening}. For a given non-crossing partition $\alpha \in NC(n)$, we consider two points $i_{-}$ and $i_{+}$ for both sides of each $i \in \{1,\ldots,n\}$,
left and right respectively, doubling the size on the index set. Associate now to $\alpha$ the following pairing: connect $i_{+}$ and $j_{-}$ if $\alpha(i) =j$, where $\alpha$ is seen now as a permutation. In can be shown that the pair partition obtained is also non-crossing, see \cite[Lecture 9]{nica2006lectures}.
We state a lemma on the fattening operation which is used later in the paper. 
\begin{lemma}\label{lemma:fat}
Take $\alpha \in NC(n)$ and denote its fattening by $\bar \alpha \in NC_2(2n)$. 
\begin{enumerate}
\item Suppose $(i_-,j_+) $ with $i\leq j$ is a pair in $\bar \alpha$. Then:
\begin{enumerate}
\item if $i=j$ implies that $i$ is a fixed point of the geodesic permutation $\alpha$
\item otherwise, $i<j$ and $\alpha$ has a cycle of the form $(i, \ldots, j)$ where the numbers in the bracket are in the increasing order.   
\end{enumerate}
\item Suppose $(i_+,j_-) $ with $i < j$ is a pair in $\bar \alpha$. Then, $\alpha$ has a cycle of the form $(\ldots i, j \ldots)$ where the numbers in the bracket are in the increasing order.   
\end{enumerate} 
\end{lemma}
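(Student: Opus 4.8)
The plan is to reduce everything to two facts already in place: the explicit description of the pairs produced by fattening, and the cyclic structure of geodesic permutations. First I would fix the ordering on the doubled index set, namely $1_- < 1_+ < 2_- < 2_+ < \cdots < n_- < n_+$, and record that by construction the pairs of $\bar\alpha$ are exactly $\{(k_+, \alpha(k)_-) : 1 \le k \le n\}$, so that any pair joining a ``minus'' point $m_-$ to a ``plus'' point $k_+$ forces $\alpha(k) = m$. This single identity is the bridge between the geometric statement (which two of the $2n$ points are joined) and the algebraic one (how $\alpha$ acts as a permutation).

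The second ingredient is the cycle structure of geodesic permutations. Since $\alpha$ is non-crossing, each of its cycles, written with its elements in increasing order as $(c_1, c_2, \ldots, c_\ell)$ with $c_1 < c_2 < \cdots < c_\ell$, acts cyclically by $\alpha(c_s) = c_{s+1}$ for $s < \ell$ and $\alpha(c_\ell) = c_1$. The consequence I would isolate is the following dichotomy: for any $k$, either $\alpha(k) > k$, in which case $\alpha(k)$ is the immediate successor of $k$ inside its increasing cycle, or $\alpha(k) \le k$, which forces $k$ to be the maximum of its cycle so that $\alpha(k)$ is the minimum. This is the only structural input, and it is exactly the defining feature of the identification between $NC(n)$ and geodesic permutations recalled above.

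With these in hand both claims become bookkeeping. For part (1), a pair $(i_-, j_+)$ with $i \le j$ must be $(\alpha(j)_-, j_+)$, whence $\alpha(j) = i \le j$. If $i = j$ then $\alpha(j) = j$ and $j$ is a fixed point, giving (1a). If $i < j$ then $\alpha(j) = i < j$, so by the dichotomy $j$ is the maximum of its cycle and $i = \alpha(j)$ is the minimum; writing that cycle in increasing order yields the form $(i, \ldots, j)$, which is (1b). For part (2), a pair $(i_+, j_-)$ with $i < j$ must be $(i_+, \alpha(i)_-)$, whence $\alpha(i) = j > i$; the dichotomy then places $j$ immediately after $i$ in the increasing cycle, giving the form $(\ldots, i, j, \ldots)$ as claimed.

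I do not expect a genuine obstacle here, as the lemma is a direct translation of the fattening recipe through the non-crossing-partition / geodesic-permutation dictionary. The only points requiring care are the orientation convention on the doubled points, which determines whether a given pair is of the $(-,+)$ or $(+,-)$ type and hence which case applies, and the correct handling of the ``wrap-around'' edge of each cycle where the maximum maps to the minimum; keeping these two conventions consistent is precisely what makes the inequalities $i \le j$ in part (1) and $i < j$ in part (2) come out exactly as stated.
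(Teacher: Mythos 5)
Your proof is correct and takes essentially the same route as the paper's (much terser) argument: read off $\alpha(j)=i$, resp.\ $\alpha(i)=j$, from the fattening recipe $(k_+,\alpha(k)_-)$, then invoke the increasing-cycle structure of geodesic permutations to pin down the cycle shape. The only difference is that you make explicit the cycle-maximum dichotomy that the paper leaves implicit in the phrase ``since the permutation $\alpha$ is on the geodesic \ldots the claim is proved.''
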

\begin{proof}
To show (1)-(a), notice that the paring $(i_-,i_+)$ implies that $i \mapsto i$ by the definition of fattening. 
Similarly, the paring $(i_-,j_+)$ implies that $j \mapsto i$, but since the permutation $\alpha$ is on the geodesic between $\mathrm{id}$ and $\pi = (1,2,\ldots,n)$ the claim is proved. 
The claim (2) also follows from the definition. 
\end{proof}

We end the combinatorial treatment of non-crossing partitions by giving the reader a taste of the connection between pairs of non-crossing partitions and meandric systems. These fact will be treated rigourously and in detail in Section \ref{sec:M-NC}. 
In Figure \ref{fig:permutation_eg}, one can find that the meander of Figure \ref{fig:meander_eg} is represented by drawing
two geodesic permutations $\{(1),(2,3)\}$ and $\{(1,2),(3)\}$, one above and one below a horizontal line (the dotted lines show the fattening of the corresponding permutations).
In Figure \ref{fig:permutation2_eg}, the fattening operation is drawn with directions,
which show how the original permutations act. 
Interestingly, the arrows from the upper and lower sides of the horizontal line are conflicting.
However, if one inverts the directions of the lower side, one can have a loop with consistent directions. 
In fact, such a loop results from the following calculation
$$(1)(2,3) \circ \left\{(1,2)(3) \right\}^{-1} = (1,3,2)$$
which is equivalent to the fact that only one meander is generated from such pairs of permutations. 
Further details are found in Section \ref{sec:M-NC} and
one can understand more of this concept going through the example and the proof of Proposition \ref{prop:meanders-to-NC-pairs}.

\begin{figure}[htbp]
\centering
\begin{minipage}{.5\textwidth}
  \centering
  \includegraphics[width=0.8\linewidth]{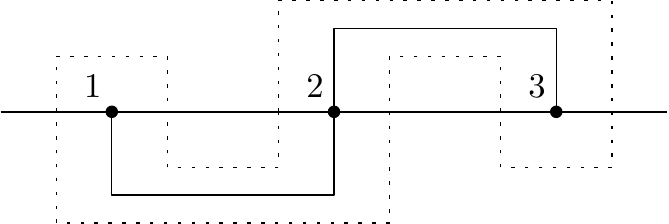}
  \captionof{figure}{A meander obtained from two non-crossing partitions $\{(1),(2,3)\}$ and $\{(1,2),(3)\}$.}
  \label{fig:permutation_eg}
\end{minipage}\begin{minipage}{.5\textwidth}
  \centering
  \includegraphics[width=0.8\linewidth]{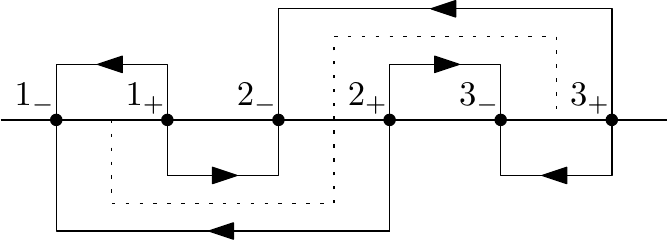}
  \captionof{figure}{Fattening with directions.}
  \label{fig:permutation2_eg}
\end{minipage}
\end{figure}

\medskip

We change now topics and discuss generating series associated to moments of probability measures and free cumulants. 
First, we define the moment generating function and R-transform:
$$ M(z) = \sum_{n \geq 1} m_n z^n \quad \text{and} \quad R(z) = \sum_{n \geq 1} \kappa_n z^n$$
where $m_n$ is the $n$-th moment and $\kappa_n$ is the $n$-th \emph{free cumulant}. 
Note that the free cumulants are defined by the following relation, called the \emph{moment-cumulant formula} \cite[Lecture 11]{nica2006lectures}:
\eq{\label{eq:m-k}
m_n = \sum_{\alpha \in NC(n)} \prod_{c\in \alpha} \kappa_{|c|}
}
Conversely, free cumulants can be expressed in terms of moments by using the M\"obius function on the $NC(n)$ lattice.  This implies that 
the two generating functions $M$ and $R$ are related by the following implicit equation (see \cite[Remark 16.18]{nica2006lectures}):
\begin{equation}\label{eq:fucntional-equation-R-M}
M(z) =  R(z(1+M(z))).
\end{equation}
Historically, the notion of R-transform is introduced by Voiculescu in a slightly different form \cite{voiculescu1985symmetries,voiculescu1986addition},
and can be defined for each compactly supported measure $\mu$ via the moments: $m_n = \int x^n \, d\mu(x)$. 

Next, we denote by $\mathcal F$ the transformation mapping an arbitrary power series $R$ to the unique power series $M$ 
which consist of sequences in \eqref{eq:m-k}, or equivalently
satisfy \eqref{eq:fucntional-equation-R-M}.
In this case, we write 
\eq{\label{eq:F-transform}
\mathcal F :  R \mapsto  M
}
The $\mathcal F$-transform is used in Section \ref{sec:M-I} where it plays a crucial role in the derivation of our main result. Its use in combinatorics predates its incarnation in free probability: in \cite{beissinger1985enumeration}, the author relates generating series for some classes of combinatorial objects to the generating series of \emph{irreducible objects} of the same type. Remarkably, the lattices studied in \cite{beissinger1985enumeration} are precisely the ones which appear in non-commutative probability theory: all partitions (in relation to classical, or tensor independence), non-crossing partitions (in relation to free independence) and ``interval-block'' partitions (in relation to Boolean independence). 

\section{From meanders to pairs of non-crossing partitions} \label{sec:M-NC}
The following result appears in several places in the literature \cite[Theorem 3.3]{hal06}, \cite[Theorem 5.7]{sav}, \cite[Section IV.C]{fsn}, or \cite[Section 3]{nica2016free}. We state it here using our language, where we identify non-crossing partitions and geodesic permutations. 

\begin{proposition}\label{prop:meanders-to-NC-pairs}
Meandric systems on $2n$ points with $n-r$ loops are in bijection with the set 
\begin{equation}\label{eq:M-nr}
M_{n,r} := \{ (\alpha,\beta) \in \mathrm{NC}(n)^2 \, : \, \|\alpha^{-1}\beta\| = r \}.
\end{equation}
We denote by $\mathcal M(\alpha, \beta)$ the  
meandric system associated to the pair $(\alpha, \beta)$.
\end{proposition}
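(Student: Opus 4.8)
The plan is to reduce the geometric statement to a cycle-count for geodesic permutations, using fattening as the bridge between pairings and permutations. By the definition recalled in the Introduction, a meandric system on $2n$ points \emph{is} an ordered pair of non-crossing pairings $(\sigma,\tau)\in NC_2(2n)^2$, drawn above and below the line. Since fattening is a bijection $NC(n)\cong NC_2(2n)$, applying its inverse in each coordinate produces a pair $(\alpha,\beta)\in NC(n)^2$ with $\sigma=\bar\alpha$ and $\tau=\bar\beta$; this already gives a bijection between all meandric systems on $2n$ points and $NC(n)^2$, so it only remains to read off the number of loops from $(\alpha,\beta)$.

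The core of the argument is to trace one loop and extract the associated permutation. Recall from the fattening rule that $i_+$ is joined to $\alpha(i)_-$ in $\bar\alpha$ and to $\beta(i)_-$ in $\bar\beta$ (this is exactly the incidence underlying Lemma \ref{lemma:fat}). I would then follow a loop by alternating an upper arc and a lower arc: starting from a right-point $i_+$, the upper arc of $\bar\alpha$ lands at $\alpha(i)_-$, and the lower arc of $\bar\beta$ from that left-point returns to the right-point $(\beta^{-1}\alpha(i))_+$. Thus one up-then-down step realizes the permutation $\beta^{-1}\alpha$ on the index set $\{1,\dots,n\}$ of right-points. Because each of the $2n$ points is an endpoint of exactly one upper and one lower arc, the diagram is a disjoint union of such alternating cycles, and these are in natural bijection with the cycles of $\beta^{-1}\alpha$. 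Hence the number of loops equals $\#(\beta^{-1}\alpha)$; this is consistent with the displayed computation $\alpha\circ\beta^{-1}=(1,3,2)$ in the worked example, since $\alpha\beta^{-1}$ and $\beta^{-1}\alpha$ are conjugate. As $\beta^{-1}\alpha$ is the inverse of $\alpha^{-1}\beta$, and a permutation shares its cycle structure with its inverse, the number of loops equals $\#(\alpha^{-1}\beta)$.

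The last step is bookkeeping. Applying the identity $\|\gamma\|+\#(\gamma)=n$ to $\gamma=\alpha^{-1}\beta$, the condition that $\mathcal M(\alpha,\beta)$ has $n-r$ loops, namely $\#(\alpha^{-1}\beta)=n-r$, is equivalent to $\|\alpha^{-1}\beta\|=r$. Restricting the bijection of the first paragraph to this condition yields the claimed bijection between meandric systems with $n-r$ loops and $M_{n,r}$.

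The step I expect to be the main obstacle is the loop-tracing in the second paragraph: one must match the geometric alternating traversal with the algebraic composition $\beta^{-1}\alpha$ with full attention to conventions — which pairing is upper versus lower, and in which order the arcs are composed — since these determine whether the relevant product is $\alpha^{-1}\beta$, $\alpha\beta^{-1}$, or a conjugate, all of which happen to share the same cycle count. One should also verify explicitly, using the degree-two incidence structure, that distinct loops correspond to distinct cycles so that no loop is over- or under-counted, and confirm via Lemma \ref{lemma:fat} that the fattening produces precisely the incidences $i_+\leftrightarrow\alpha(i)_-$ used above.
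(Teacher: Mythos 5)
Your proof is correct and follows essentially the same route as the paper's own sketch: identify the upper and lower halves of a meandric system with $NC(n)^2$ via fattening, trace each loop by alternating upper and lower arcs so that loops correspond to cycles of $\beta^{-1}\alpha$ (the paper starts from left-points going down first and obtains the conjugate $\alpha\beta^{-1}$), and conclude with the identity $\|\gamma\|+\#(\gamma)=n$. The only difference is the tracing convention ($\beta^{-1}\alpha$ versus $\alpha\beta^{-1}$), which is immaterial since these permutations have the same cycle count.
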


Before proving the result, let us describe how the bijection works on an example for $n=5$. 
In Figure \ref{fig:MtoP}, two different geodesic permutations are represented by black lines in the upper and lower sides of the horizontal line: the permutation $\alpha$ is depicted on top, while $\beta$ is depicted below the horizontal line. 
First, let us focus on the upper permutation, which is $\alpha = (1,2,3)(4,5)$. 
The red lines are the fattening of the permutation which is the non-crossing paring: $\bar \alpha = (1_-,3_+)(1_+,2_-)(2_+,3_-)(4_-,5_+)(4_+,5_-)$.
The red arrows show how this non-crossing paring is related to the original permutation. 
Indeed, $1\mapsto 2 \mapsto 3 \mapsto 1$ is represented by $1_+ \mapsto 2_-$, $2_+ \mapsto 3_-$ and $3_+ \mapsto 1_-$.
Similarly, $4 \mapsto 5 \mapsto 4$ is indicated by $4_+ \mapsto 5_-$ and $5_+ \mapsto 4_-$.
Next, the black lines in lower part represent $\beta = (1)(3)(2,4,5)$, and 
the fattening $\bar \beta =(1_-,1_+)(2_-,5_+)(2_+,4_-)(3_-,3_+)(4_+,5_-) $ is drawn by blue lines. 
However, this time the arrows are reversed, i.e., they direct from $*_-$ to $*_+$ while $*_+$ to $*_-$ for the red lines,
where $* \in \{1,\ldots,5\}$.
This is because we want to consider $[(1)(3)(2,4,5)]^{-1}$ where $5 \mapsto 4 \mapsto 2 \mapsto 5$ is represented by
$5_- \mapsto 4_+$, $4_- \mapsto 2_+$ and $2_- \mapsto 5_+$. 
Then, joining red and blue lines, we get the loop structure of the corresponding meanders and 
the number of loops in the meanders equals the number of loops of $(1,2,3)(4,5)\circ(1)(3)(5,4,2) = (1,2,4,3)(5)$, which is $2$. 
In this example, we verified that the number of loops in the induced meandric system is $2=5-3$ 
where $\|\alpha\beta^{-1}\|=3$. 

\begin{figure}[htbp] 
  \centering
    \includegraphics[width=1\textwidth]{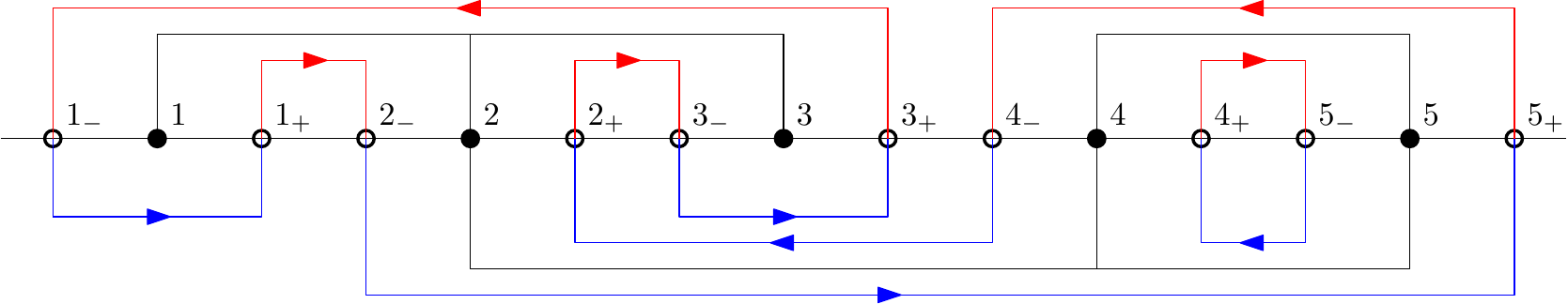}
\caption{Meanders generated by $\alpha = (1,2,3)(4,5)$ and $\beta = (1)(3)(2,4,5)$.}
 \label{fig:MtoP}
\end{figure}

\begin{proof}[Sketch of proof of Proposition \ref{prop:meanders-to-NC-pairs}]
We show the following bijection:
\eq{
NC(n) \times NC(n) = NC_2 (2n) \times NC_2(2n) = \{\text{meandric systems}\}  
}
where the first identification is the fattening;
$NC(n)$ on $\{i\}_{i=1}^n$ and $NC_2(2n)$ on $\{i_-,i_+\}_{i=1}^n$.
Namely, for two permutations $\alpha,\beta \in S_n$, 
placing $i_-$ on the left side of $i$ and $i_+$ on the right, we identify
\eqb{
 i &\mapsto \alpha(i)  \qquad\qquad& j &\mapsto \beta^{-1}(j)  \quad\text{respectively by} \\
i_+ &\mapsto \alpha(i)_- & j_- &\mapsto \beta^{-1}(j)_+ .
}
Here, the second line shows how the meandric system shapes as described in the above example. 
I.e., we can move along meanders to count the number of cycles:
\eq{
1_- \to [\beta^{-1} (1)]_+ \to [\alpha \beta^{-1}(1)] _- \to [\beta^{-1} \alpha \beta^{-1}(1) ]_+ \to \ldots \to 1_-  
}
As you can see, we visit $*_-$ and $*_+$ in turn ($* \in \{1,\ldots,n\}$), and then $( \alpha \beta^{-1})^m(1_-)=1_-$ for some $m \in \mathbb N$.
This sequence corresponds to a cycle in the permutation $\alpha \beta^{-1}$ which is a loop in the meandric system.
Then, we choose $i$ such that $i_-$ has not been visited, and identify another cycle in a similar way. 
We continue this process until we exhaust all $\{i_-\}_{i=1}^n$ to count the number of cycles in $\alpha \beta^{-1}$.
To finish the proof, remember that $\#(\alpha\beta^{-1})=n-\|\alpha\beta^{-1}\| = n-\|\alpha^{-1}\beta\|$.
\end{proof}

\medskip

To close this section we would like to overview how problems of meandric systems are related to other three interesting research topics, besides non-crossing partitions.  

Firstly, the meandric numbers $M_n^{(1)}$ count also the number of configurations of a folding closed polymer in 2D,
where a meander is regarded as a polymer. See Figure \ref{fig:polymer_eg}, which is compared with Figure \ref{fig:meander_eg}.
A folding open polymer can be thought of a \emph{semi-meander}, but we do not treat this problem in this current paper. 
Interested readers are referred to \cite{difrancesco1997meander}. 

\begin{figure}[htbp]
\begin{minipage}{.5\textwidth}
 \centering
  \includegraphics[width=0.5\linewidth]{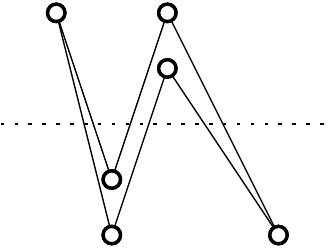}
  \captionof{figure}{A folding closed polymer.}
  \label{fig:polymer_eg}
\end{minipage}\begin{minipage}{.5\textwidth}
 \centering
  \includegraphics[width=0.8\linewidth]{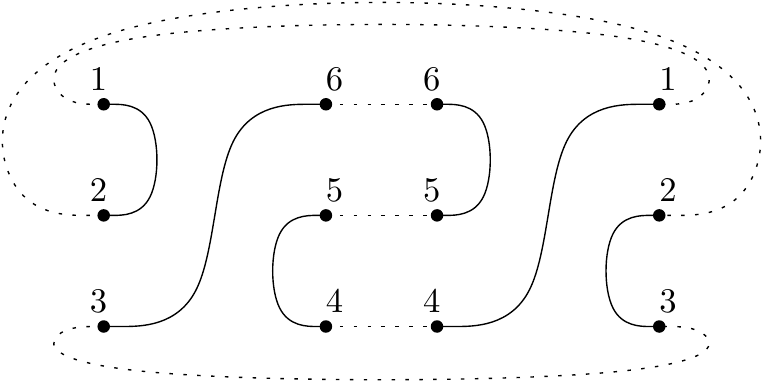}
  \captionof{figure}{$\trace [(e_1e_2)\cdot(e_2e_1)^T]$ in the Temperley-Lieb Algebra.}
  \label{fig:TLA_eg}
\end{minipage}
\end{figure}

Secondly, the scalar product in the Temperley-Lieb Algebra corresponds to
a meandric system, which contributes to $M_n^{(k)}$. 
This algebra is generated by $\{1,e_1,\ldots,e_{n-1}\}$ where 
\begin{enumerate}
\item $e_i^2 = w\cdot e_i$ for $i=1,\ldots,n-1$. 
\item $[e_i,e_j] = 0$ if $|i-j| \geq 2$. 
\item $e_i \cdot e_{i\pm1}\cdot e_i =e_i$ for $i=1,\ldots,n-1$. 
\end{enumerate} 
Here, $w>0$ is a scalar weight. 
The braid graphical representations of $e_i$  and $e_1\cdot e_2$ are found in Figure \ref{fig:braid_eg} and \ref{fig:braid2_eg}.
As you can see in Figure \ref{fig:TLA_eg}, $e_1\cdot e_2$ can be identified to the upper side of Figure \ref{fig:meander_eg}
and $(e_2e_1)^T $ to the lower side so that the scalar product is 
the weight of loop, say $w$, powered to the number of loops of the meandric system. 
Interested readers are referred to \cite{DGG_alg}.  

\begin{figure}[htbp]
\begin{minipage}{.5\textwidth}
  \centering
  \includegraphics[width=0.4\linewidth]{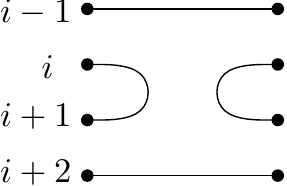}
  \captionof{figure}{$e_i$ in the braid representation.}
  \label{fig:braid_eg}
\end{minipage}\begin{minipage}{.5\textwidth}
 \centering
  \includegraphics[width=0.9\linewidth]{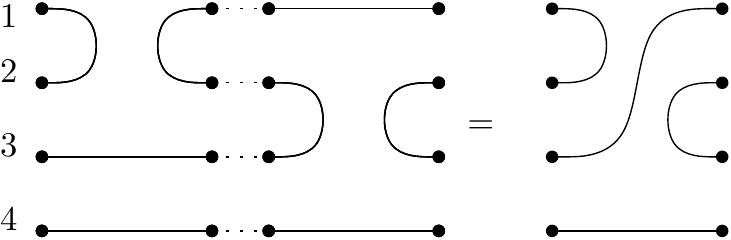}
  \captionof{figure}{$e_1 \cdot e_2$  in the braid representation.}
  \label{fig:braid2_eg}
\end{minipage}
\end{figure}

Thirdly, there is a mathematical relation to quantum information theory. 
While studying partial transpose of random quantum states  \cite{fsn}, 
we came across  \emph{meander polynomials} :
\begin{align}
M_n (x) := \sum_{k=1}^n M_n^{(k)} x^k
\end{align}
Based on this fact, 
a new random matrix model for the meander polynomials was found. 
That is,
if you take a complex Gaussian random matrix $G \in M_{N^2,x} (\mathbb C)$ with the mean $0$ and the variance $1/N$ for each entry,
we get it as the limiting moments:
\begin{align}
M_n (x) = \lim_{N \to \infty} \frac 1 {N^2} \trace \mathbb E \left( (GG^*)^\Gamma \right)^{2n} 
\end{align} 
where $\Gamma$ is called \emph{partial transpose}, with which we apply transpose only to one of spaces 
of the bipartite system $\mathbb C^n \otimes \mathbb C^n$. 
Note that $GG^*$ will be a random quantum state (i.e.~a positive semidefinite matrix with unit trace) with proper normalization.  

\section{Irreducible meandric systems}
\label{sec:irred}

The concept of \emph{irreducible meandric systems} was introduced by Lando and Zvonkin in \cite{lando1993plane}. 
Informally, these are meandric systems on $2n$ points such that 
there is no interval $[a,b] \subseteq [1,2n]$ with the property that the restriction of the meandric system to $[a,b]$ is another meandric system; 
see Figure \ref{fig:irred-vs-red} for examples. Let us explain here the elements appearing in Figure \ref{fig:irred-vs-red}, 
since such diagrams will continue to be used from here on. The meandric systems are represented by the red, thin lines; here, the red curve(s) intersect the horizontal line $2n=8$ times. The non-crossing partitions $\alpha$ and $\beta$ generating the meander are represented with black lines, above and respectively below the horizontal line; in the left panel of Figure \ref{fig:irred-vs-red} we have $\alpha = (24)$ and $\beta = (13)$. Finally, the $n=4$ points on which the geodesic permutations corresponding to $\alpha$ and $\beta$ act are represented by blue dots. 

\begin{figure}[htbp]
\begin{center}
\includegraphics[scale=0.2]{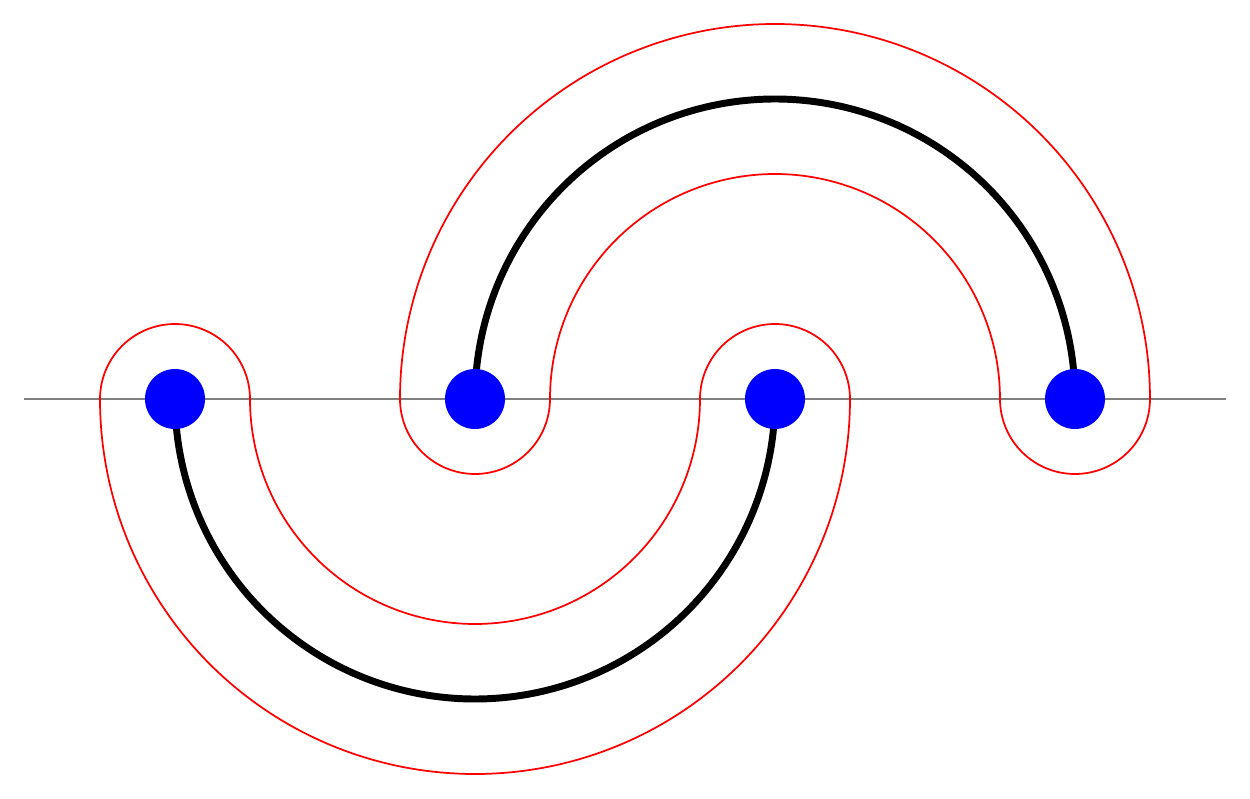} \qquad  \qquad  \qquad  
\includegraphics[scale=0.2]{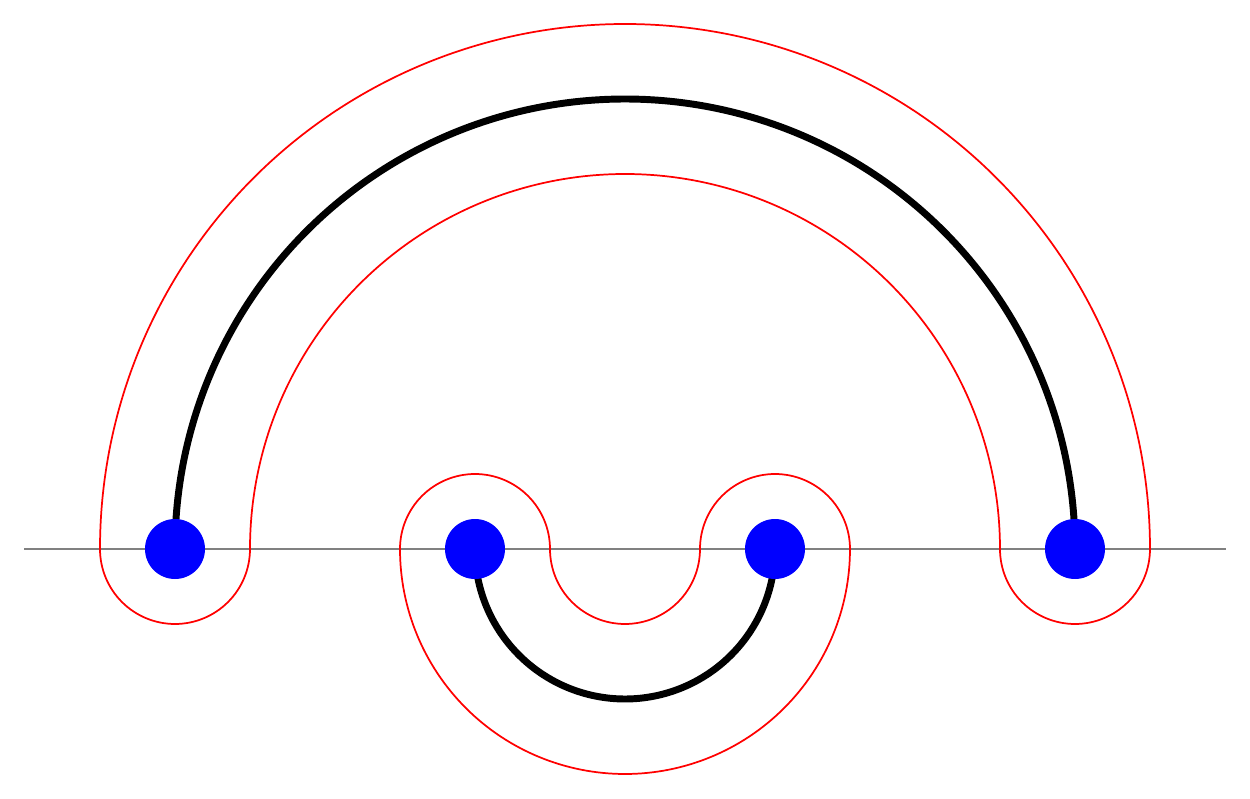}
\end{center}
\caption{On the left, an irreducible meandric system. On the right a reducible meandric system: the diagram in the center gives a proper meandric system.}
\label{fig:irred-vs-red}
\end{figure}

In the language of non-crossing partitions, irreducible meandric systems have been shown \cite[Theorem 1.1 or Proposition 3.4]{nica2016free} to be in bijection with the set
\begin{equation}\label{eq:def-In}
I_{n}:=\{(\alpha,\beta) \in NC(n)\, : \, \alpha \wedge \beta = 0_n,\, \alpha \vee \beta = 1_n\}.
\end{equation}

Interestingly, Lando and Zvonkin show that if $C(x)$ and, respectively, $I(x)$ are the generating series for the square Catalan numbers, and the number of irreducible meandric systems
$$C(x) = \sum_{n \geq 0} \mathrm{Cat}_n^2 x^n, \qquad I(x) = \sum_{n \geq 0} |I_n| x^n,$$
they satifsfy the functional equation $C(x) = I(xC^2(x))$, allowing them to obtain the asymptotic growth rate of the number of irreducible meandric systems \cite{lando1993plane}
$$\limsup_{n \to \infty} |I_n|^{1/n} = (\pi/(4-\pi))^2.$$
We note that the problem of estimating the asymptotic growth of the sequence $(M_n^{(1)})_n$ (the number of meanders, i.e.~meandric systems with one loop) is largely open. It is conjectured that
$$M_n^{(1)} \sim C \rho^n n^{-\kappa},$$
with $\kappa = (29+\sqrt{145})/12$ \cite{di2000meanders,di2000exact} and $\rho \approx 12.26287$, while it is known that $11.380 \leq \rho \leq 12.901$ \cite{albert2005bounds}. We do not discuss this problem here, and we think that tackling it would require some new ideas (see \cite[Section 5]{nica2016free} for some recent considerations). In this work, we do compute the asymptotic behavior of number of meandric systems on $2n$ points with $n-r$ loops (for fixed $r$), see Corollaries \ref{cor:asympt} and \ref{cor:asympt-precise-6}.

\medskip

One of the main new insights of the current work is to further partition the set of irreducible meandric systems in terms of the lengths of the permutations $\alpha,\beta$ and in terms of the distance between $\alpha$ and $\beta$.

\begin{definition}
We call a pair of non-crossing partitions $(\alpha, \beta) \in NC(n)^2$ \emph{irreducible of type $(n,r,a,b)$} if the following conditions are simultaneously satisfied:
\begin{enumerate}
\item $\alpha \wedge \beta = 0_n$
\item $\alpha \vee \beta = 1_n$
\item $\|\alpha^{-1}\beta\| = r$
\item $\|\alpha\|=a$
\item $\|\beta\|=b$.
\end{enumerate}
The corresponding meandric system $\mathcal M(\alpha, \beta)$ is also called irreducible of type $(r,a,b)$. We write
\begin{equation}\label{eq:def-I-nrab}
I_{n,r,a,b}:=\{(\alpha,\beta) \in NC(n)^2\, : \, \alpha \wedge \beta = 0_n,\, \alpha \vee \beta = 1_n, \, \|\alpha^{-1}\beta\| = r,\, \|\alpha\| = a,\, \|\beta\| = b\}.
\end{equation}
\end{definition}

Let us consider some examples. At $n=1$, we obtain the unique irreducible meandric system with $r=0$: $\alpha = \beta = (1)$; the corresponding triple of parameters is $(0,0,0)$. 
At $n=2$, we have the the following four possible parameter triples and the last two correspond to irreducible meandric systems (see Figure \ref{fig:n-2} for a graphical representation): 
\begin{itemize}
\item $(0,0,0)$: $\alpha = \beta = (1)(2)$
\item $(0,1,1)$: $\alpha = \beta = (12)$
\item $(1,0,1)$: $\alpha = (1)(2)$, $\beta = (12)$
\item $(1,1,0)$: $\alpha = (12)$, $\beta = (1)(2)$.
\end{itemize}

\begin{figure}[htbp]
\begin{center}
\includegraphics[scale=0.2]{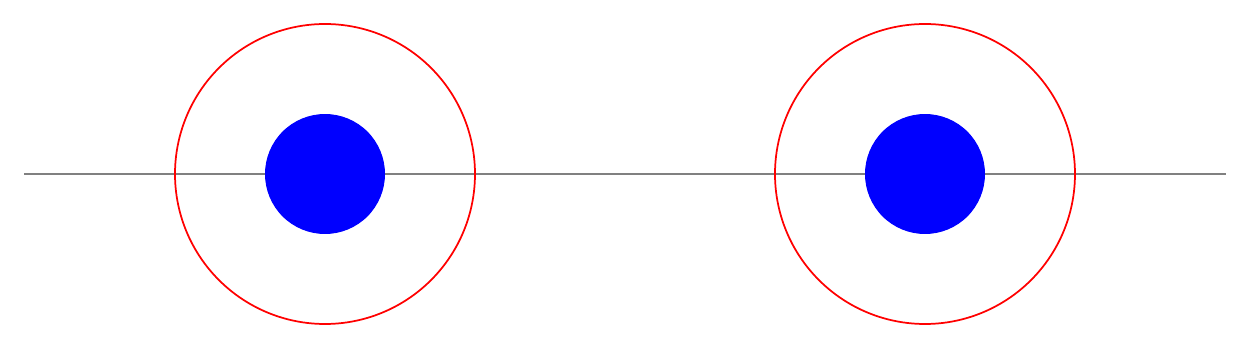} \qquad \includegraphics[scale=0.2]{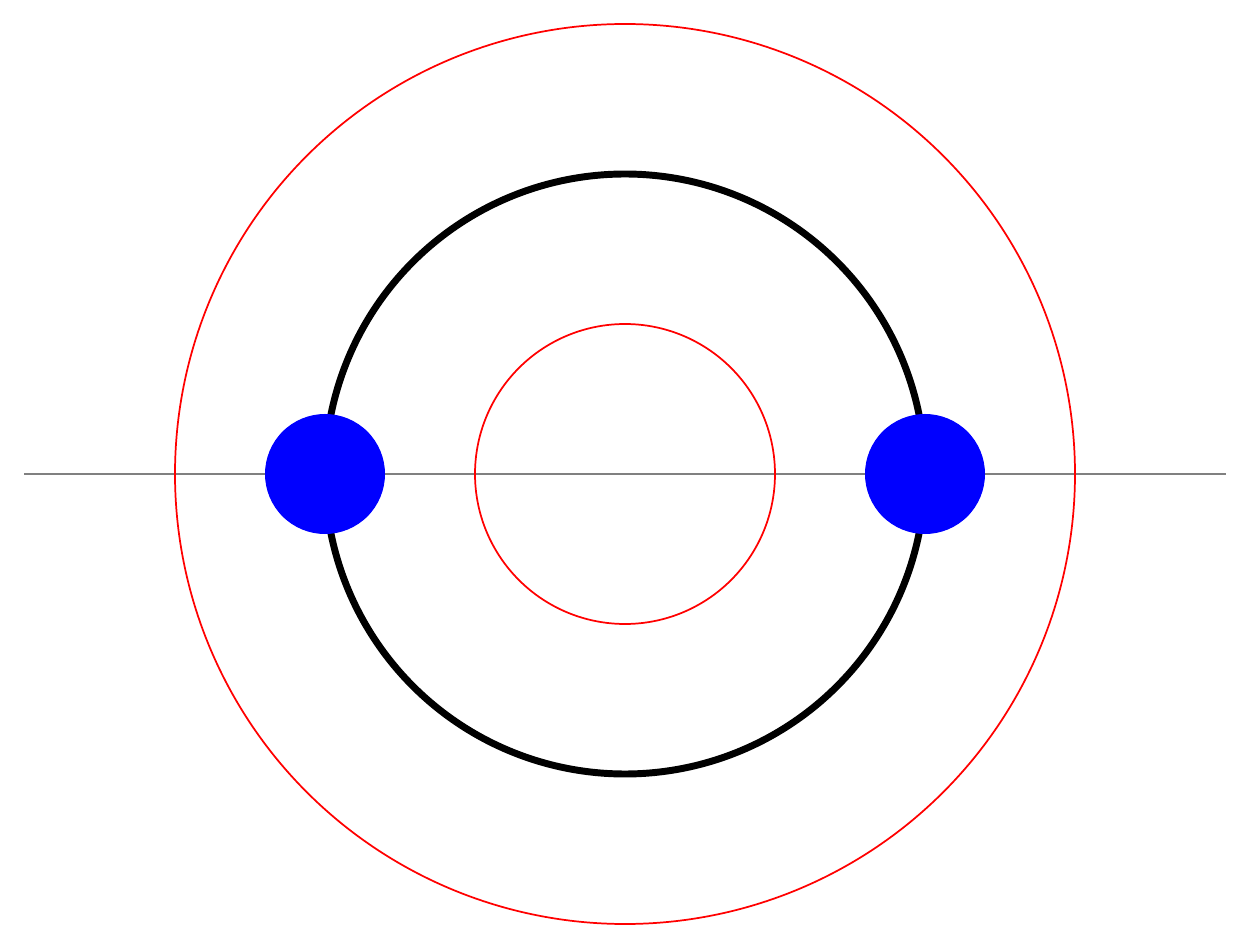} \qquad \includegraphics[scale=0.2]{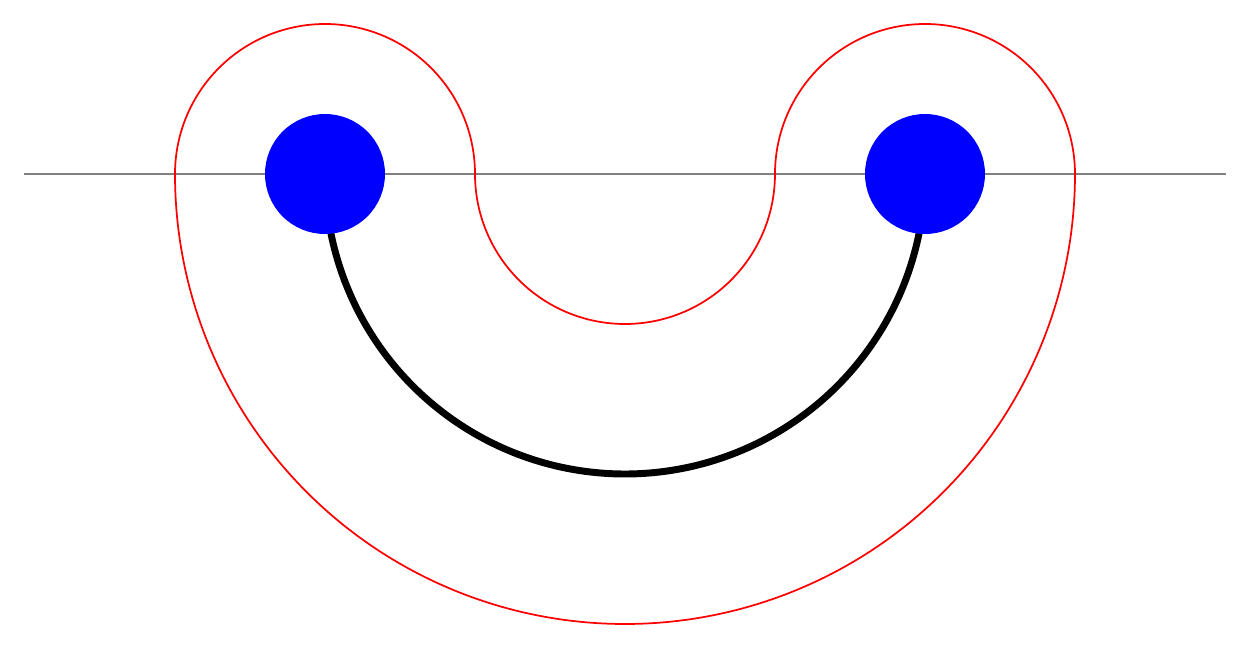} \qquad \includegraphics[scale=0.2]{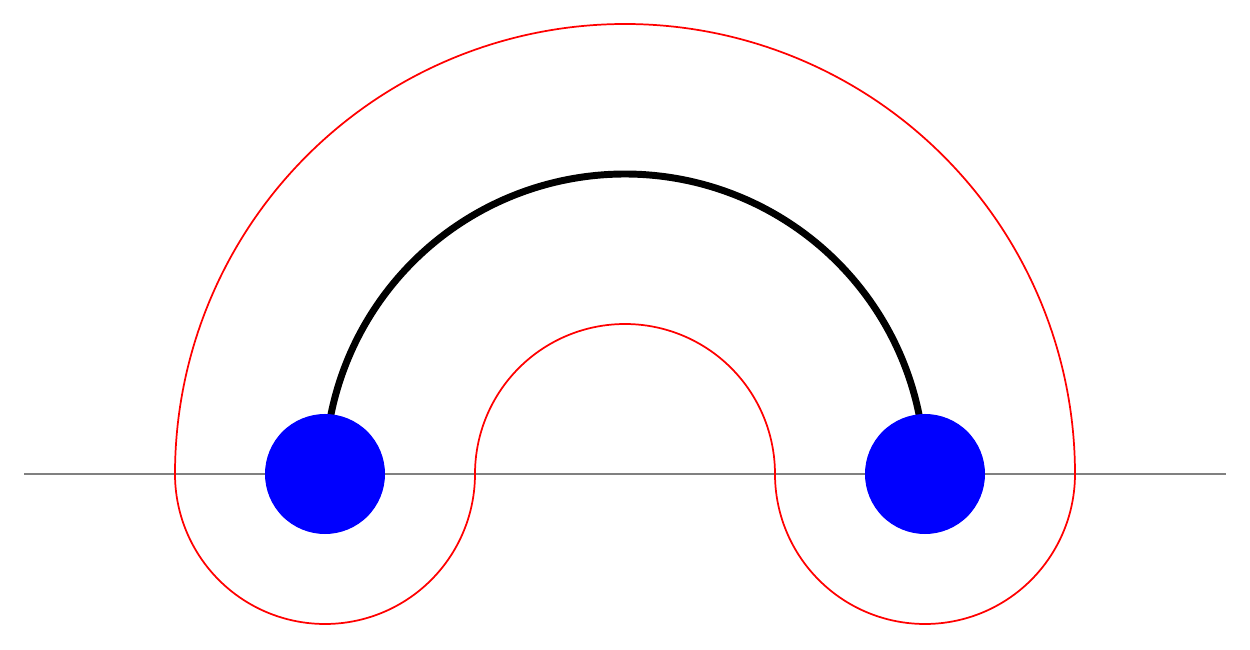}
\end{center}
\caption{All meandric systems on $n=2$ points: [$\alpha = \beta = (1)(2)$], [$\alpha = \beta = (12)$], [$\alpha = (1)(2)$, $\beta = (12)$], [$\alpha = (12)$, $\beta = (1)(2)$]. Only the last two examples correspond to irreducible meandric systems.}
\label{fig:n-2}
\end{figure}

One of the key facts that will be used in what follows is that the parameters $n,r,a,b$ need to verify some restrictions in order for the set $I_{n,r,a,b}$ to be non-empty. 

\begin{definition}\label{def:compatible}
A quadruple of non-negative integers $(n,r,a,b)$ is called \emph{compatible} if it satisfies the following conditions:
\begin{enumerate}
\item $a,b \leq \max(2r-2,1)$
\item $|a-b| \leq r \leq a+b$
\item $a-b$ and $a+b$ have the same parity as $r$
\item If $r=|a-b|$, then $\min(a,b) = 0$ and $\max(a,b)=r$
\item $r+1 \leq n \leq 2r + \mathbf{1}_{n=1}$.
\end{enumerate}
In particular, a triple $(r,a,b)$ is called compatible if it satisfies the first four conditions above for some fixed $n$. 
\end{definition}

As an example, in Figure \ref{fig:compatible}, we have indicated by filled disks the possible values of $a,b$ such that 
the quadruple $(n,r=6,a,b)$ is compatible, disregarding the value of $n \leq 12$. 
It turns out however that the compatibility conditions 
are not sufficient to ensure that $I_{n,r,a,b} \not =\emptyset$: 
for all $n\leq 12$, the sets $I_{n,6,8,10}$, $I_{n,6,9,9}$, $I_{n,6,10,8}$, and $I_{n,6,10,10}$ are empty.

\begin{figure}[htbp]
\begin{center}
\includegraphics{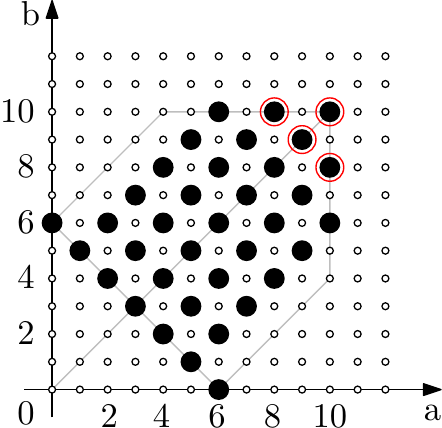}
\caption{The set of pairs $(a,b)$ such that the quadruple $(n,r=6,a,b)$ is compatible is depicted by large, filled disks. It turns out that some of these quadruples (the ones marked with a red circle) correspond to empty sets $I_{n,r,a,b}$: 
this implies that the compatibility conditions from Definition \ref{def:compatible} are not sufficient for $I_{n,r,a,b} \not =\emptyset$.}
\label{fig:compatible}
\end{center}
\end{figure}

We prove now the main result of this section.

\begin{proposition}\label{prop:compatible}
If a quadruple $(n,r,a,b)$ is not compatible, then the corresponding set $I_{n,r,a,b}$ is empty.
\end{proposition}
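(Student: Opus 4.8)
The plan is to argue the contrapositive: assuming $I_{n,r,a,b}\neq\emptyset$, fix a pair $(\alpha,\beta)$ in it and verify all five compatibility conditions. Conditions (2) and (3) are insensitive to irreducibility and follow from the metric structure of $\mathcal S_n$. Since $\|\cdot\|$ is a class function with $\|\gamma^{-1}\|=\|\gamma\|$, the triangle inequality applied to $\alpha=(\alpha\beta^{-1})\beta$ and its mirror gives $|a-b|\leq r=\|\alpha^{-1}\beta\|\leq\|\alpha^{-1}\|+\|\beta\|=a+b$, which is (2); and since $\mathrm{sgn}(\gamma)=(-1)^{\|\gamma\|}$, comparing signs in $\alpha^{-1}\beta$ yields $(-1)^r=(-1)^{a+b}$, so $r\equiv a+b\equiv a-b\pmod 2$, which is (3). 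The lower bound $n\geq r+1$ in (5) is equally cheap: $r=n-\#(\alpha^{-1}\beta)\leq n-1$ because every permutation has at least one cycle.

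For condition (4) I would use the geodesic characterization of the order on $NC(n)$. Assuming without loss of generality $a\geq b$, the equality $r=a-b$ reads $\|\beta\|+\|\beta^{-1}\alpha\|=\|\alpha\|$ (using $\|\beta^{-1}\alpha\|=\|\alpha^{-1}\beta\|=r$), which is exactly the statement that $\beta$ lies on the geodesic between $\mathrm{id}$ and $\alpha$, i.e.\ $\beta\leq\alpha$. Then $\alpha\wedge\beta=\beta$ and $\alpha\vee\beta=\alpha$, so irreducibility forces $\beta=0_n$ and $\alpha=1_n$; hence $\min(a,b)=b=0$ and $\max(a,b)=a=n-1=r$, which is (4).

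The heart of the matter is the upper bound $n\leq 2r$ in (5) (for $n\geq 2$; the case $n=1$ is the exceptional one and is immediate), and this is where I expect the real work and where both lattice conditions enter. The plan is to encode the pair by its \emph{bipartite block-intersection graph} $G$: one vertex per block of $\alpha$ and one per block of $\beta$, with an edge for each point $i\in\{1,\dots,n\}$ joining the $\alpha$-block and the $\beta$-block containing $i$. The condition $\alpha\wedge\beta=0_n$ says no two points share both their $\alpha$-block and their $\beta$-block, so $G$ is \emph{simple} with exactly $n$ edges; write $p=\#(\alpha)$, $q=\#(\beta)$ so that $G$ has $p+q$ vertices, and let $k$ be its number of connected components. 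Because $\alpha,\beta$ are non-crossing, $G$ is \emph{planar} (equivalently, $\mathcal M(\alpha,\beta)$ is a planar arrangement of disjoint loops). Two facts then combine. First, the classical genus inequality for the pair $(\alpha,\beta^{-1})$ (the Euler characteristic of a connected map is at most $2$) gives $\#(\alpha)+\#(\beta)+\#(\alpha\beta^{-1})\leq n+2k$, that is $r=n-\#(\alpha\beta^{-1})\geq p+q-2k$, with equality in the planar case. Second, I claim every component of $G$ has at least three vertices: a two-vertex component would be a single $\alpha$-block equal to a single $\beta$-block, both a common singleton $\{i\}$; but then $\{i\}\sqcup(\{1,\dots,n\}\setminus\{i\})$ is a non-crossing partition lying above both $\alpha$ and $\beta$ and below $1_n$, so it would force $\alpha\vee\beta\neq 1_n$ — hence $\alpha\vee\beta=1_n$ rules it out. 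With all components having $\geq 3$ vertices, the bipartite planar bound $E\leq 2V-4$ applies componentwise and sums to $n\leq 2(p+q)-4k$, i.e.\ $p+q-2k\geq n/2$. Combining, $r\geq p+q-2k\geq n/2$, which is $n\leq 2r$.

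Finally, condition (1) will follow from (5). By the $\alpha\leftrightarrow\beta$ symmetry it suffices to bound $a$. If $r\leq 1$ then (5) forces $n\leq 2$, whence $a\leq n-1\leq 1=\max(2r-2,1)$ (the case $r=0$ giving $n=1$, $a=0$). If $r\geq 2$, then $a\leq n-1\leq 2r-1$ by (5); the extremal value $a=2r-1$ would force $n=2r$ and $\#(\alpha)=1$, i.e.\ $\alpha=1_n$, whereupon $\alpha\wedge\beta=\beta=0_n$ and $r=\|\pi^{-1}\|=n-1=2r-1$ give $r=1$, a contradiction; hence $a\leq 2r-2=\max(2r-2,1)$. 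The decisive and least routine step is the planarity argument for $n\leq 2r$, and in particular the clean division of labor whereby $\alpha\wedge\beta=0_n$ controls the edges of $G$ while $\alpha\vee\beta=1_n$ enters \emph{only} to forbid the common singletons that would otherwise violate the componentwise edge bound.
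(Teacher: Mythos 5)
Your proof is correct, and for most of the conditions it runs parallel to the paper's own argument: conditions (2)--(3) via the triangle inequality and the sign homomorphism, condition (4) via the tight-triangle/geodesic observation forcing $\{\alpha,\beta\}=\{0_n,1_n\}$, the lower bound in (5) from $\#(\alpha^{-1}\beta)\geq 1$, and condition (1) by the same extremal analysis ($a=n-1$ forces $\alpha=1_n$, $\beta=0_n$, hence $r=n-1\geq 2$). The genuine divergence is at the heart of the proposition, the bound $n\leq 2r$. The paper stays inside the meandric picture: by Lemma \ref{lemma:fat}, a loop of an irreducible system meeting the line in only two points would contradict $\alpha\vee\beta=1_n$ (the case $\{i_-,i_+\}$) or $\alpha\wedge\beta=0_n$ (the cases with $i\neq j$); hence every loop crosses the line at least four times, so the number of loops is at most $n/2$ and $n=r+\#\mathrm{loops}\leq r+n/2$. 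You instead work with the bipartite block-intersection graph $G$ and import two classical facts: the Euler-characteristic (genus) inequality $\#(\alpha)+\#(\beta)+\#(\alpha\beta^{-1})\leq n+2k$, and the edge bound $E\leq 2V-4$ for connected simple bipartite planar graphs on at least three vertices. The two lattice hypotheses play exactly parallel roles in the two proofs: $\alpha\wedge\beta=0_n$ rules out multiple edges of $G$ (the paper's two-point loops with $i\neq j$), while $\alpha\vee\beta=1_n$ rules out two-vertex components, i.e.\ common singleton blocks (the paper's loops $\{i_-,i_+\}$); moreover ``every loop crosses the line at least four times'' is precisely the girth-$\geq 4$ input behind $E\leq 2V-4$, since in the natural embedding of $G$ the faces are traced out by the loops of $\mathcal M(\alpha,\beta)$, so the arguments are in a sense dual. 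What your route buys is a purely lattice/graph-theoretic proof, independent of the fattening formalism and of Lemma \ref{lemma:fat}, with irreducibility entering through two cleanly separated graph conditions; what it costs is reliance on two imported black boxes, one of which---the planarity of $G$---you assert without proof (it does hold, and is easy: draw each block of $\alpha$ as a star above the line and each block of $\beta$ as a star below it, routing the edge for point $i$ through the point $i$).
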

\begin{proof}
The claim (2) comes from the triangle inequality in the metric space $\mathcal S_n$ with the distance $d(\alpha, \beta) = \|\alpha^{-1}\beta\|$.
The claim (3) is clear because of well-definedness of parity; 
if one writes a permutation as products of transpositions, the parity is same for all possible products. 
To show (4), suppose $a \leq b$ without loss of generality. 
Then, $r= b-a$ corresponds to a tight case for the triangle inequality, and we have the geodesic: $\mathrm{id} - \alpha - \beta$,
so that $\alpha \wedge \beta = 0_n$ implies that $\alpha = 0_n$ and hence the claim. 
Next, we prove the upper-bound in (5).  The case $n=1$ corresponds to the case $(n,r,a,b)=(1,0,0,0)$. 
Suppose now $n \geq 2$ and we show that $n/2$ bounds from above the second term of the right hand side of the following identity:
$$ n=r + (\text{the number of loops in the meanders}).$$
To this end, note that for irreducible meandric systems  every loop intersects the horizontal line at least four times. 
Indeed, consider a meandric system of two non-crossing pairings $\bar \alpha, \bar \beta \in NC_2(2n)$ of $\{1_-,1_+,\ldots, n_-, n_+\}$. 
Suppose for a contradiction that there is a loop which intersects the horizontal line at only two points, 
for example, $\{i_-,j_+\}$ with $i \leq j$ or $\{i_+,j_-\}$ with $i<j$. 
By Lemma \ref{lemma:fat}, $i=j$ implies that $\alpha \vee \beta \not = 1_n$, and 
$i \not =j$ that $\alpha \wedge \beta \not = 0_n$. This contradiction proves that 
the number of loops in an irreducible meandric system is bounded by $\frac {2n}{4}$,
because it is impossible to draw a loop which intersects with the horizontal line an odd number of times. 
The lower bound in (5) follows from the fact that the diameter of $\mathcal S_n$ is $n-1$. 
Finally, we show (1) for $a$ (the proof for $b$ being similar). 
When $n\leq 2$, the claim is true based on the above observation. 
For $n \geq 3$ it is easy to see from (5) that 
\[
a \leq n-1 \leq 2r-1
\]
Suppose $a=n-1$ and 
this implies that $\alpha = 1_n$ and hence $\beta = 0_n$ for irreducible meandric systems. 
Then, we have $n=r+1 \leq 2r-1$ so that $a \leq 2r-2$.  
This completes the proof.
\end{proof}

The bound on $n$ in the result above is interesting: if we are interested in irreducible meandric systems with a fixed parameter $r$, we only need to investigate non-crossing partitions of sizes at most $2r$; this fact will be useful in the proof of our main result and also in the numerical procedures used to generate irreducible meandric systems \cite{num}. In Figure \ref{fig:all-irred-meanders-r-2}, we have represented all irreducible meandric systems with $r=2$; note that the maximal size of non-crossing partitions appearing in the list is $n=4$. 

\begin{figure}[htbp]
\begin{center}
\begin{tabular}{ccccccc}
\includegraphics[scale=0.2]{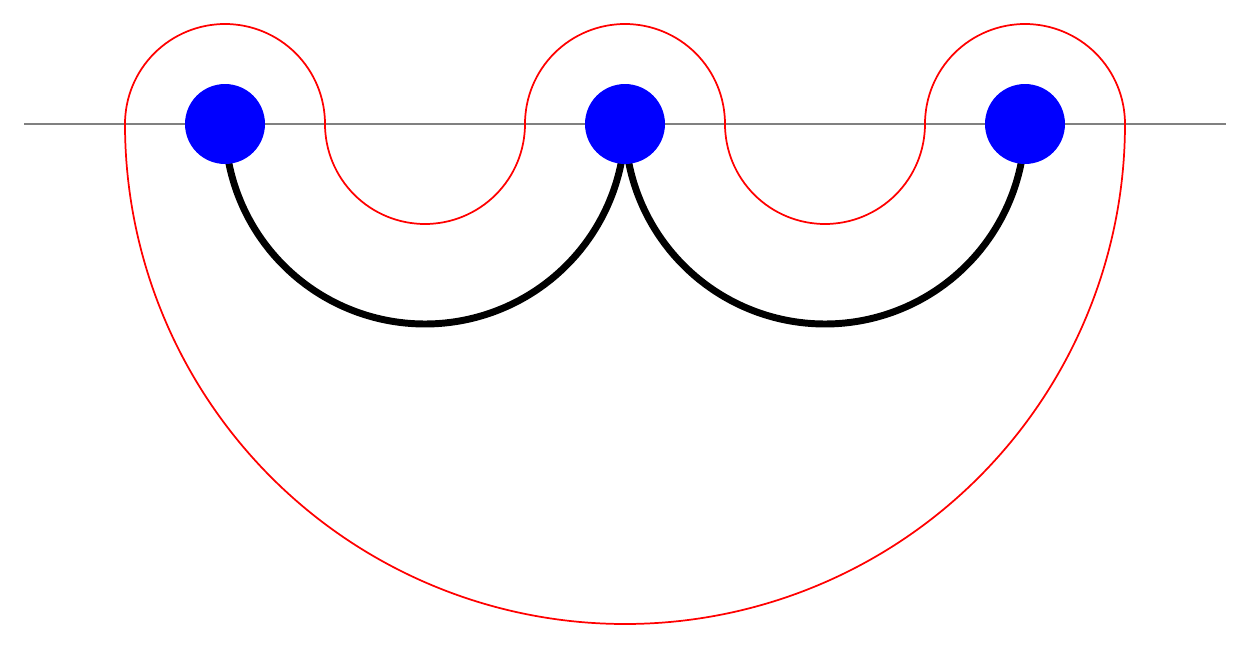} & \quad & 
\includegraphics[scale=0.2]{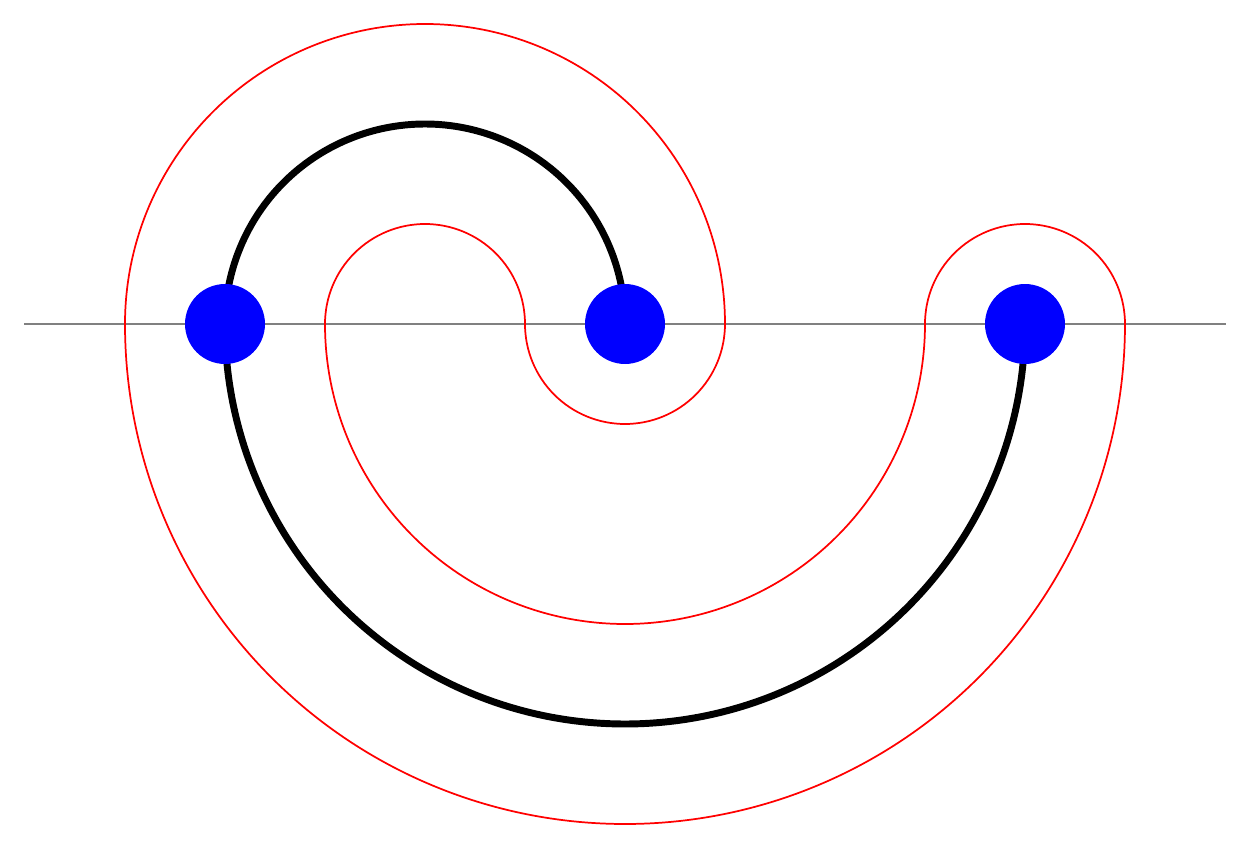}  & \quad & 
\includegraphics[scale=0.2]{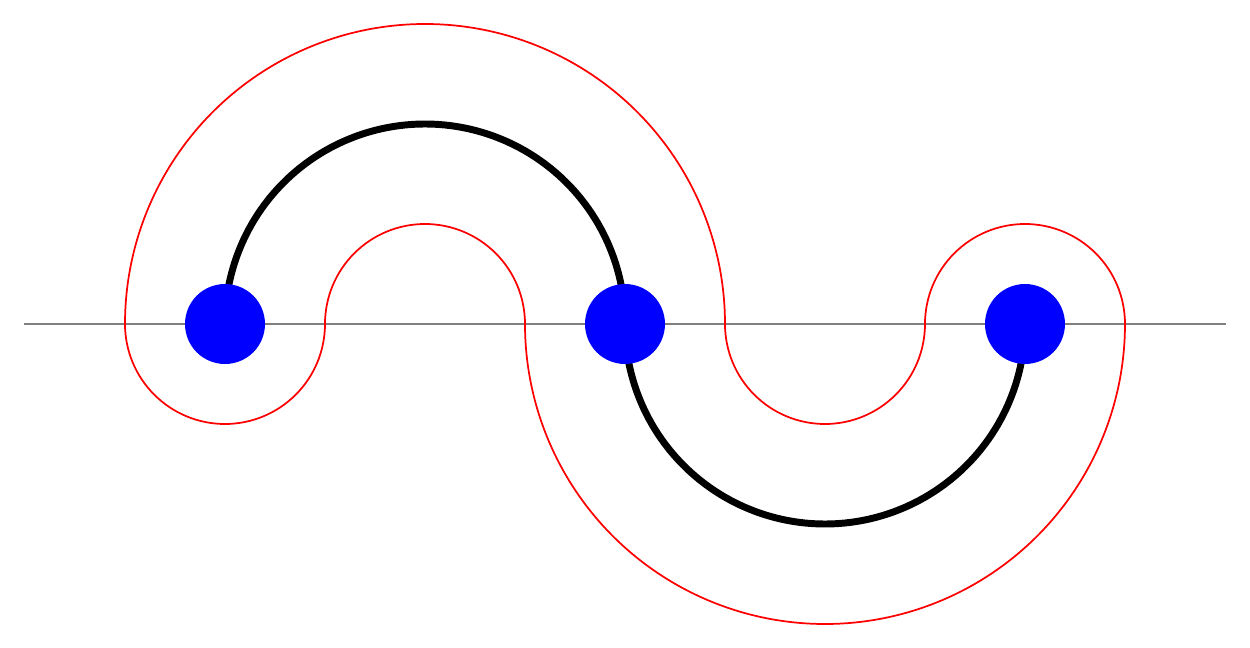} & \quad & 
\includegraphics[scale=0.2]{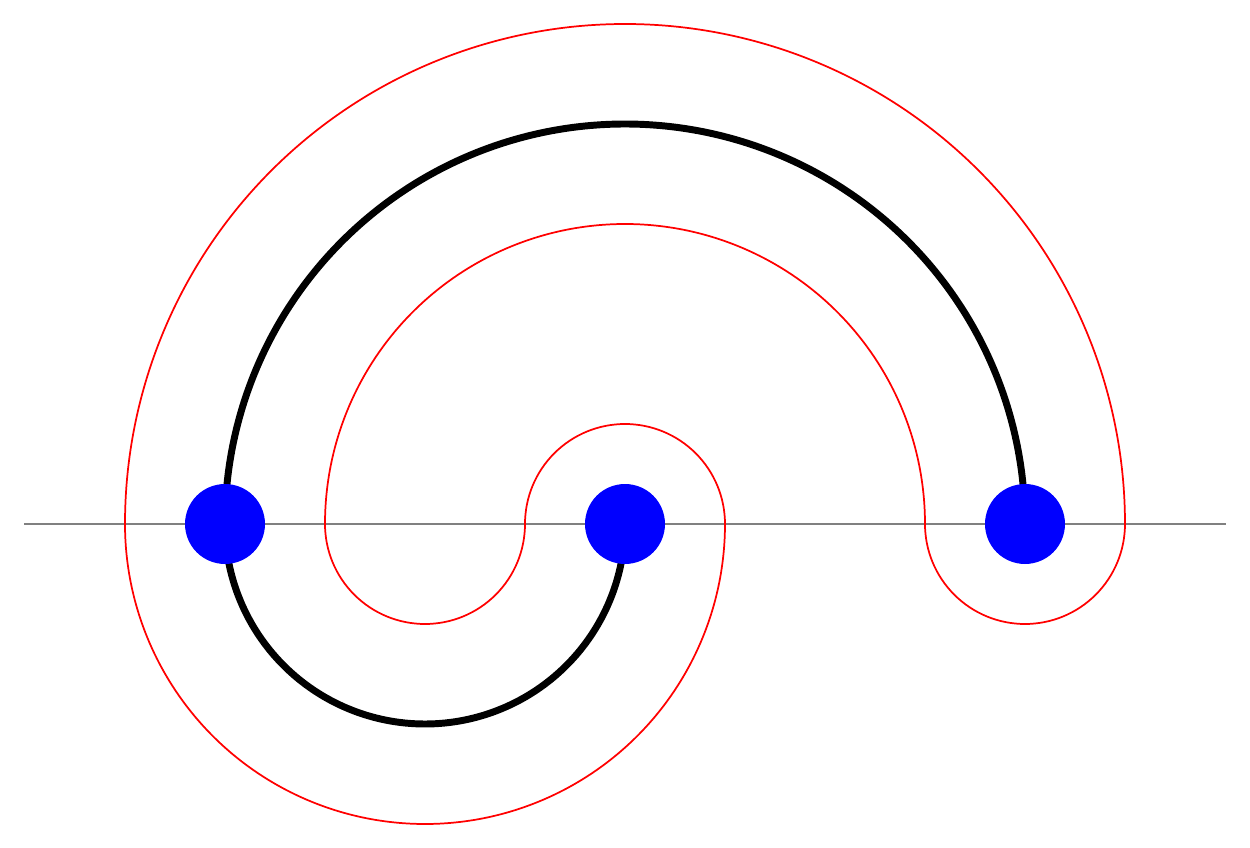} \\ \medskip 
 
\includegraphics[scale=0.2]{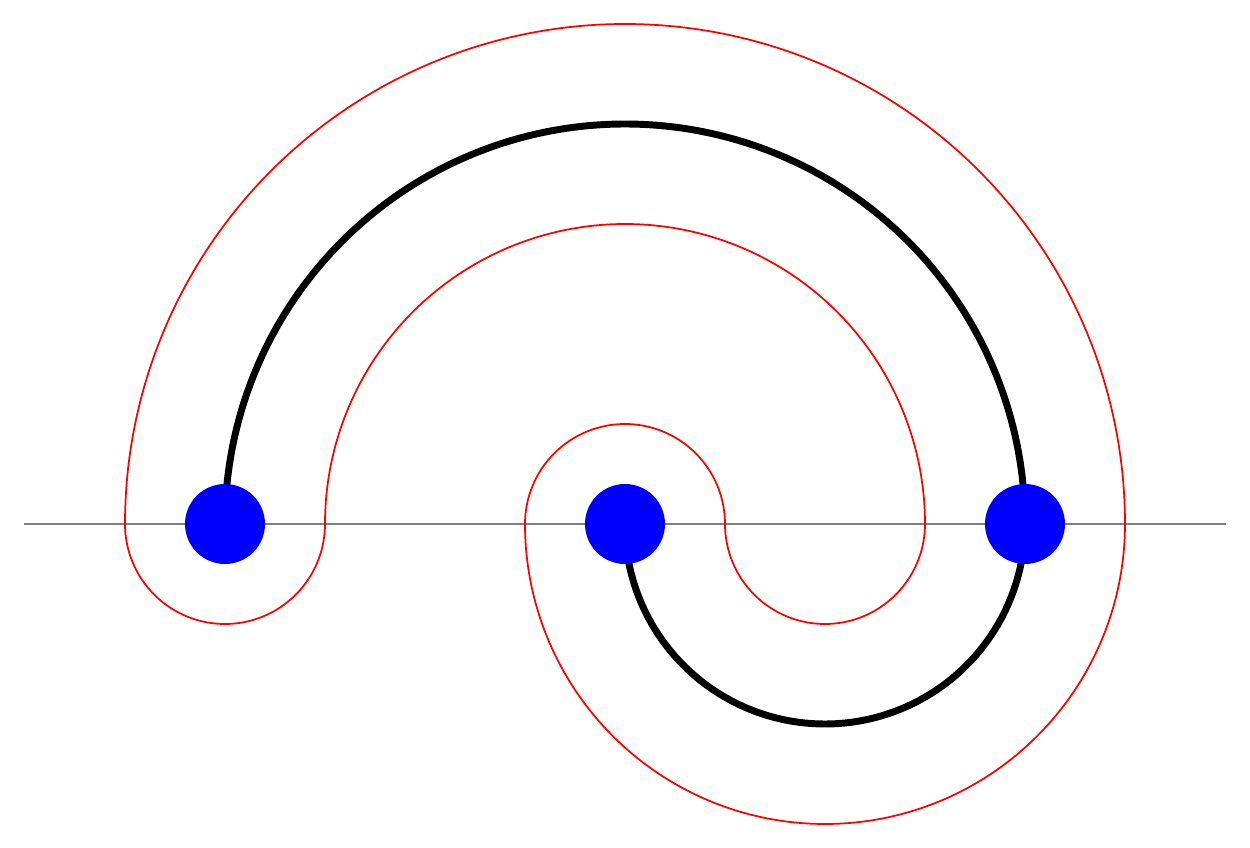} & \quad & \includegraphics[scale=0.2]{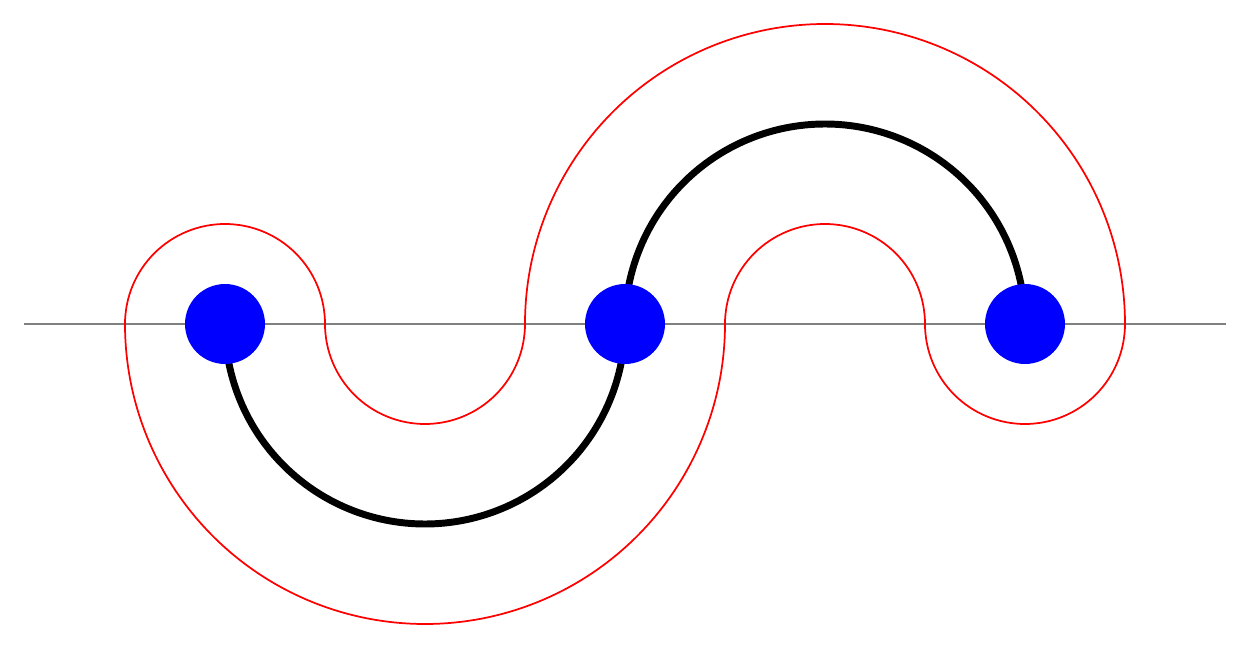} & \quad & 
\includegraphics[scale=0.2]{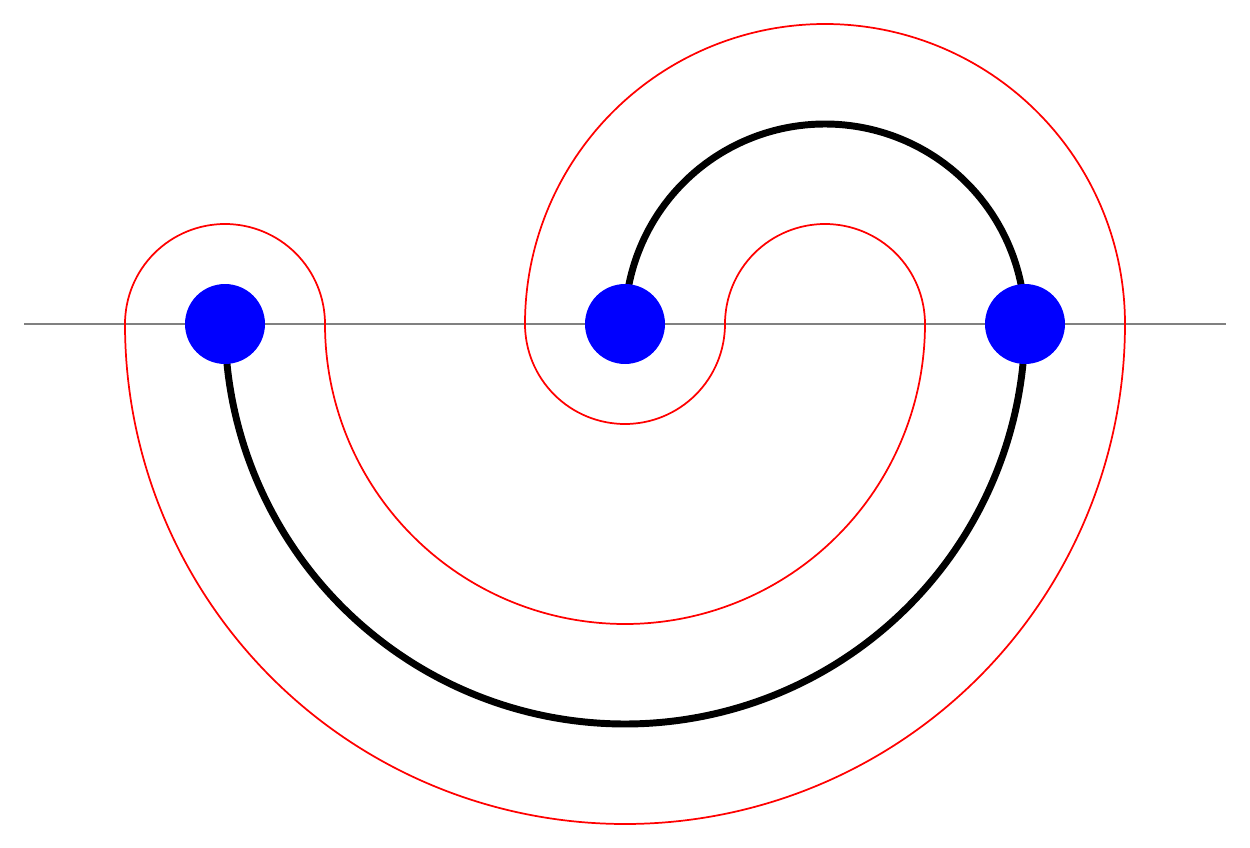} & \quad & 
\includegraphics[scale=0.2]{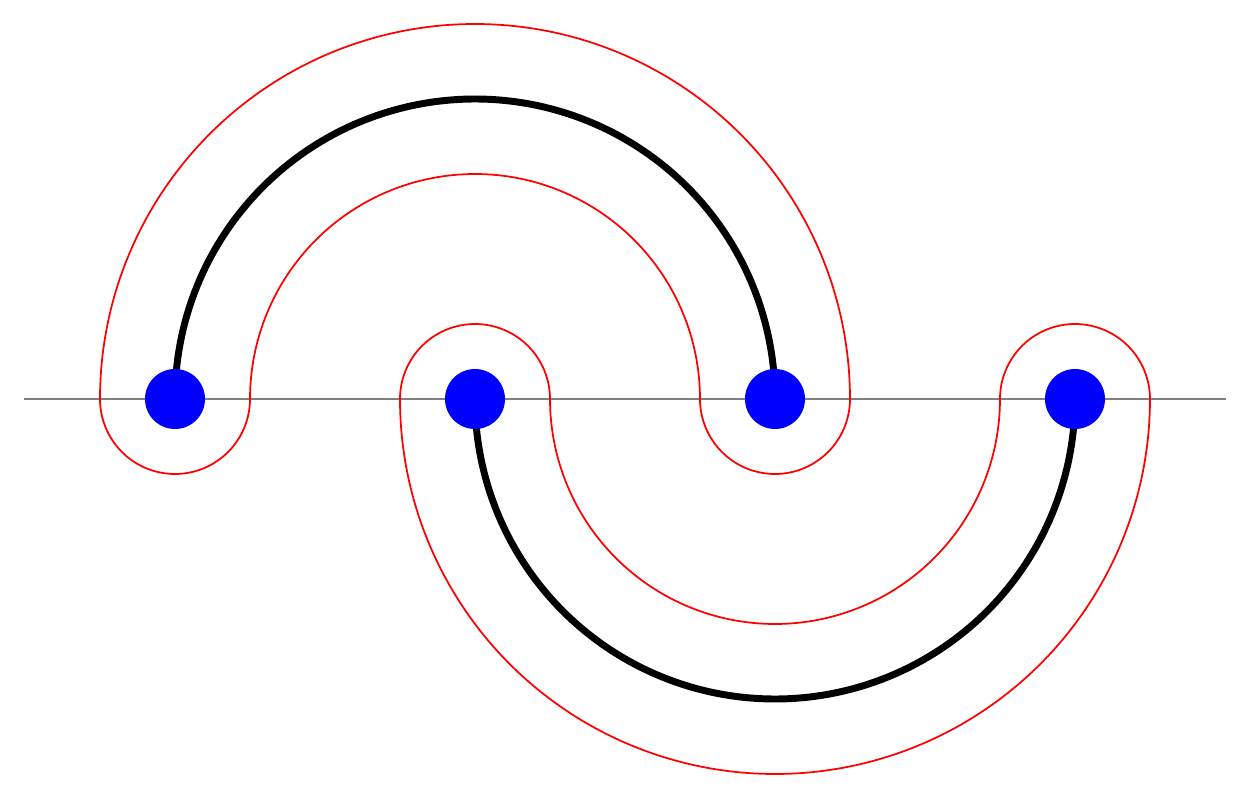} \\ \medskip 

\includegraphics[scale=0.2]{r-2-a-1-b-1-nb-14}  & \quad & 
\includegraphics[scale=0.2]{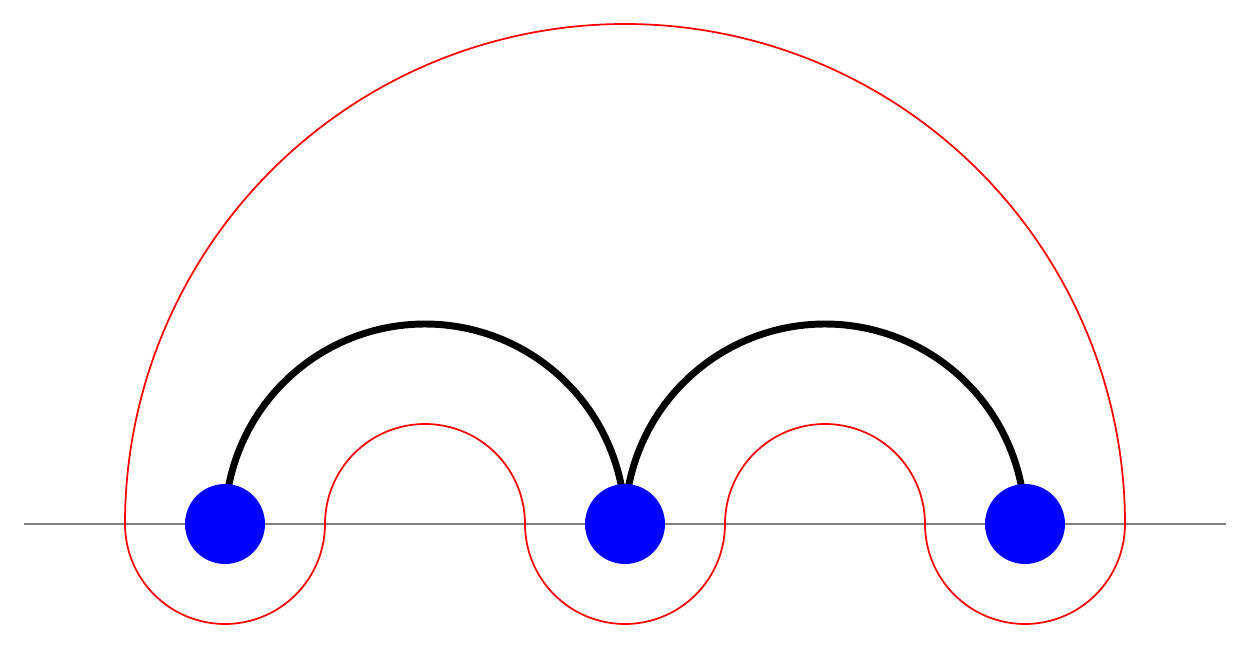} & \quad & 
\includegraphics[scale=0.2]{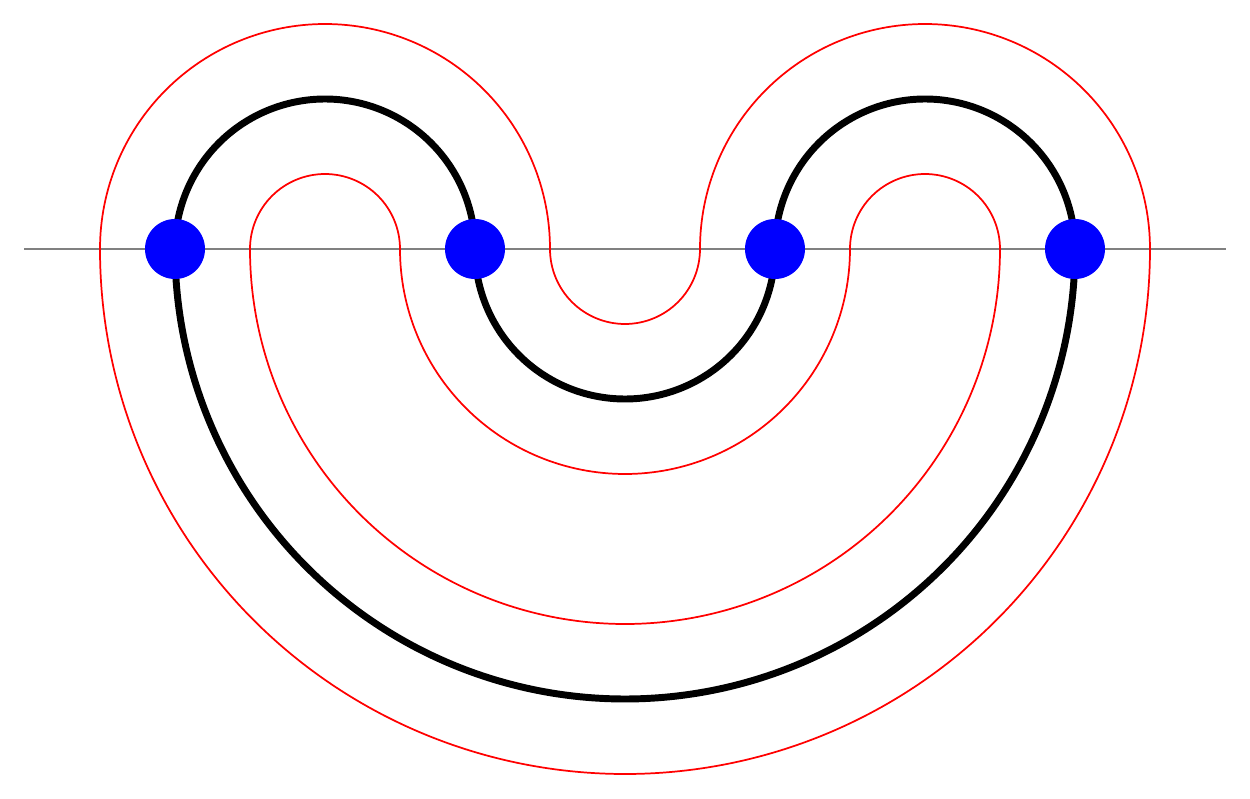} & \quad & 
\includegraphics[scale=0.2]{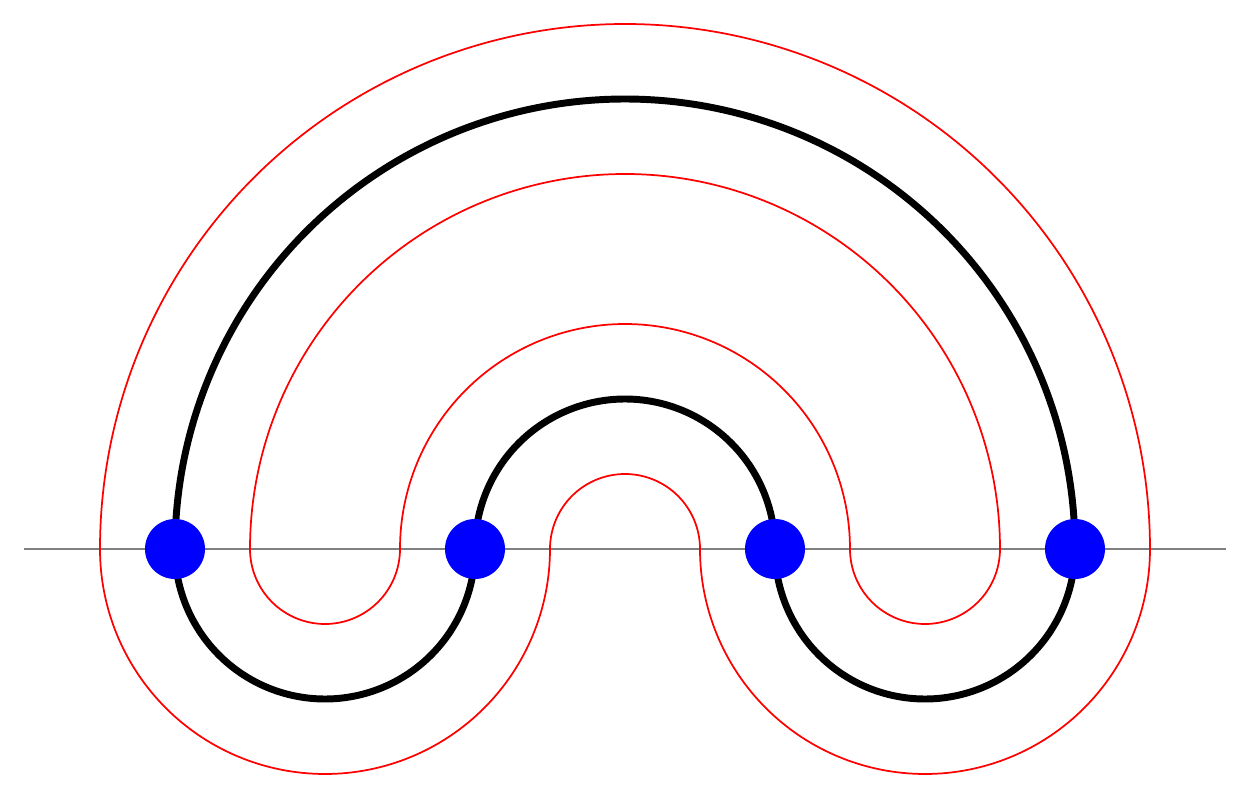}
\end{tabular}
\end{center}
\caption{All irreducible meandric systems of type $(2,*,*)$. From top to bottom, left to right, the first meander is of type $(2,0,2)$, the next 8 are of type $(2;1,1)$, the next is of of type $(2,2,0)$, and the last two are of type $(2,2,2)$. }
\label{fig:all-irred-meanders-r-2}
\end{figure}

\section{Counting meandric systems using irreducible meandric systems}
\label{sec:M-I}

We list  three important sets of meandric systems, i.e. pair of non-crossing partitions in $NC(n)^2$:
\eq{
\label{eq:I-nrab}
I_{n,r,a,b} &:= \{(\alpha,\beta) \, :\, \alpha \wedge \beta = 0, \, \alpha \vee \beta = 1, \,  \|\alpha^{-1}\beta\| = r,\, \|\alpha\| = a,\, \|\beta\| = b \}\\
\label{eq:K-nrab}
K_{n,r,a,b}&:=\{(\alpha,\beta) \, : \, \alpha \vee \beta = 1, \,  \|\alpha^{-1}\beta\| = r,\, \|\alpha\| = n-1-a,\, \|\beta\| = n-1-b \}\\
\label{eq:M-nrab}
M_{n,r,a,b}&:=\{(\alpha,\beta) \, : \, \|\alpha^{-1}\beta\| = r,\, \|\alpha^{-1}(\alpha \vee \beta)\| = a,\, \|\beta^{-1}(\alpha \vee \beta)\| = b\}
}
The three sets above count pairs of non-crossing partitions (or, equivalently, geodesic permutations), according to the $r,a,b$ statistics, having some particular geometric properties in the lattice $NC(n)$. The first one was already introduced in \eqref{eq:def-I-nrab} and its formal generating series for irreducible meandric systems, 
together with their statistics $r,a,b$, is 
\eqb{\label{eq:I}
I(X,Y,A,B) &= \sum_{(n,r,a,b) \text{ compatible}}  |I_{n,r,a,b}| X^nY^rA^aB^b\\
&=  \sum_{r,a,b \geq 0}   e_{r,a,b}(X) \cdot Y^rA^aB^b 
\quad \text{where}\quad e_{r,a,b}(X) = \sum_{n \geq 1}|I_{n,r,a,b}|X^n \\
 &=\sum_{n \geq 1}   e_{n}(Y,A,B) \cdot X^n 
\quad \text{where}\quad e_{n}(Y,A,B) = \sum_{r,a,b \geq 0}|I_{n,r,a,b}| Y^rA^aB^b 
}
This series starts as follows
\begin{align*}
I(X,Y,A,B) &=X +  X^2 YA +  X^2 Y B+  \\
&\qquad +X^3 Y^2 A^2 + 6 X^3 Y^2 A B  + X^3 Y^2 B^2 +  2X^4 Y^2  A B + 2 X^4 Y^2 A^2 B^2 + o(Y^2).
\end{align*}
Note that the coefficients of $Y^2$ (corresponding to $r=2$) are associated to the irreducible meandric systems from Figure \ref{fig:all-irred-meanders-r-2}.

In a similar fashion, we introduce a refinement of the set $M_{n,r}$ from \eqref{eq:M-nr}, to take into account the statistics $a,b$,
and we denote by $M$ its formal generating series
\eqb{\label{eq:M}
M(X,Y,A,B) &= \sum_{(n,r,a,b) \text{ compatible}}  |M_{n,r,a,b}| X^nY^rA^aB^b\\
&=  \sum_{r,a,b \geq 0}   g_{r,a,b}(X) \cdot Y^rA^aB^b 
\quad \text{where}\quad g_{r,a,b}(X) = \sum_{n \geq 1}|M_{n,r,a,b}|X^n \\
 &=\sum_{n \geq 1}   g_{n}(Y,A,B) \cdot X^n 
\quad \text{where}\quad g_{n}(Y,A,B) = \sum_{r,a,b \geq 0}|M_{n,r,a,b}| Y^rA^aB^b 
}
where  $M_{n,r,a,b}$ is defined in \eqref{eq:M-nrab}.
Note that for all $r \geq 0$, the generating function $F_r$ of meandric systems on $2n$ points with $n-r$ loops is given by
$$F_r(X) := \sum_{n \geq 1} M_n^{(n-r)} X^n = [Y^r]M(X,Y,1,1).$$
For this reason, our final goal is to collect information on generating function $M(X,Y,A,B)$.

Finally we introduce the formal generating series for $K_{n,r,a,b}$ defined in \eqref{eq:K-nrab}:
\eqb{\label{eq:K}
K(X,Y,A,B) &= \sum_{(n,r,a,b) \text{ compatible}}  |K_{n,r,a,b}| X^nY^rA^aB^b\\
&=  \sum_{r,a,b \geq 0}   f_{r,a,b}(X) \cdot Y^rA^aB^b 
\quad \text{where}\quad f_{r,a,b}(X) = \sum_{n \geq 1}|K_{n,r,a,b}|X^n \\
 &=\sum_{n \geq 1}   f_{n}(Y,A,B) \cdot X^n 
\quad \text{where}\quad f_{n}(Y,A,B) = \sum_{r,a,b \geq 0}|K_{n,r,a,b}| Y^rA^aB^b 
}
which is an intermediate definition bridging $I(X,Y,A,B)$ and $M(X,Y,A,B)$ .

We prove now the main result of this paper, connecting the two formal power series $I$ and $M$. 

\begin{theorem}\label{thm:M-I}
The formal generating series for $I$, $K$ and $M$ from \eqref{eq:I}, \eqref{eq:K} and \eqref{eq:M}, respectively, 
are related by the following relation:
\eq{
I \stackrel{\mathcal F}\longmapsto K \stackrel{\mathcal F}\longmapsto M
}
or equivalently, 
\eq{
\{e_{n}(Y,A,B)\}_{n \geq 1} \stackrel{\mathcal F}\longmapsto \{f_{n}(Y,A,B)\}_{n \geq 1}
\stackrel{\mathcal F}\longmapsto\{g_{n}(Y,A,B)\}_{n \geq 1}
}
for all $r,a,b \geq 0$. Remember that the transform $\mathcal F$ is defined in \eqref{eq:F-transform}.
\end{theorem}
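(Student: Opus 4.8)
The plan is to realize each of the two arrows $I \stackrel{\mathcal F}\longmapsto K$ and $K \stackrel{\mathcal F}\longmapsto M$ as an instance of the moment--cumulant formula \eqref{eq:m-k}, where the role of ``$n$'' (the variable conjugate to $X$) is played by the size of the non-crossing partitions, and the three remaining variables $Y,A,B$ are treated as fixed formal parameters. Concretely, I would regard $\{e_n\}$, $\{f_n\}$, $\{g_n\}$ as sequences valued in $\mathbb{Q}[[Y,A,B]]$ and show that each decomposition below multiplies the weight $Y^rA^aB^b$ block by block; this multiplicativity is exactly what is needed for $\mathcal F$, applied over the coefficient ring $\mathbb{Q}[[Y,A,B]]$, to reproduce the moment--cumulant convolution. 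Both arrows come from the same mechanism: the interval $[0_n,\gamma]$ in the lattice $NC(n)$ factorizes as $\prod_{V\in\gamma}NC(V)$, so that for each $\gamma\in NC(n)$ the restriction map $(\alpha,\beta)\mapsto(\alpha_V,\beta_V)_{V\in\gamma}$ (writing $\alpha_V$ for the restriction of $\alpha$ to the block $V$) is a bijection from $\{(\alpha,\beta)\in NC(n)^2:\alpha\vee\beta=\gamma\}$ onto $\prod_{V\in\gamma}\{(\alpha_V,\beta_V)\in NC(V)^2:\alpha_V\vee\beta_V=1_V\}$.

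For the arrow $K \stackrel{\mathcal F}\longmapsto M$ I would decompose an arbitrary pair $(\alpha,\beta)\in M_{n,r,a,b}$ according to $\gamma=\alpha\vee\beta$. Each restricted pair has full join on its block, hence contributes to some $K_{|V|,r_V,a_V,b_V}$, and summing over all $\gamma\in NC(n)$ realizes the moment--cumulant sum with $M$ as moments and $K$ as cumulants. The point to check is additivity of the three statistics over the blocks of $\gamma$: since $\alpha,\beta\le\gamma$, the permutation $\alpha^{-1}\beta$ preserves the blocks of $\gamma$, giving $r=\|\alpha^{-1}\beta\|=\sum_V\|\alpha_V^{-1}\beta_V\|$; and because $\alpha\le\gamma$ lies on a geodesic, $a=\|\alpha^{-1}(\alpha\vee\beta)\|=\|\alpha^{-1}\gamma\|=\sum_V\big((|V|-1)-\|\alpha_V\|\big)$. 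This is precisely why $K_{n,r,a,b}$ is defined with the shifted lengths $\|\alpha\|=n-1-a$, $\|\beta\|=n-1-b$: on a block $V$ the $K$-parameter equals $a_V=(|V|-1)-\|\alpha_V\|=\|\alpha_V^{-1}(\alpha_V\vee\beta_V)\|$, so that $Y^rA^aB^b=\prod_V Y^{r_V}A^{a_V}B^{b_V}$ and $g_n=\sum_{\gamma\in NC(n)}\prod_{V\in\gamma}f_{|V|}$.

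For the arrow $I \stackrel{\mathcal F}\longmapsto K$ I would first transport the defining conditions of $K$ through the Kreweras complement. Using $(\alpha\vee\beta)^{\mathrm{Kr}}=\alpha^{\mathrm{Kr}}\wedge\beta^{\mathrm{Kr}}$ and $(\alpha\wedge\beta)^{\mathrm{Kr}}=\alpha^{\mathrm{Kr}}\vee\beta^{\mathrm{Kr}}$, the conjugation-invariance of $\|\cdot\|$ (which gives $\|(\alpha^{\mathrm{Kr}})^{-1}\beta^{\mathrm{Kr}}\|=\|\alpha^{-1}\beta\|$), and $\|\alpha^{\mathrm{Kr}}\|=\|\pi\|-\|\alpha\|=(n-1)-\|\alpha\|$, the map $(\alpha,\beta)\mapsto(\alpha^{\mathrm{Kr}},\beta^{\mathrm{Kr}})$ is a bijection from $K_{n,r,a,b}$ onto $K'_{n,r,a,b}:=\{(\alpha',\beta')\in NC(n)^2:\alpha'\wedge\beta'=0_n,\ \|(\alpha')^{-1}\beta'\|=r,\ \|\alpha'\|=a,\ \|\beta'\|=b\}$, preserving all four indices, so that $f_n=\sum_{r,a,b}|K'_{n,r,a,b}|\,Y^rA^aB^b$. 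Now I would decompose $(\alpha',\beta')\in K'_{n,r,a,b}$ according to $\epsilon=\alpha'\vee\beta'$: the meet condition $\alpha'\wedge\beta'=0_n$ restricts to $\alpha'_V\wedge\beta'_V=0_V$ on each block, while $\alpha'_V\vee\beta'_V=1_V$ by construction, so every piece is irreducible and contributes to $I_{|V|,\cdot,\cdot,\cdot}$. Here $r$, $a=\|\alpha'\|$, $b=\|\beta'\|$ are manifestly additive over the blocks, so the same factorization of $Y^rA^aB^b$ gives $f_n=\sum_{\epsilon\in NC(n)}\prod_{V\in\epsilon}e_{|V|}$, i.e. $I\stackrel{\mathcal F}\longmapsto K$.

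The routine verifications---that the restriction maps are bijections onto products of full-join pieces, and that the meet-zero and full-join conditions are inherited by restrictions and recovered upon reassembly---rest on the product structure of $[0_n,\gamma]$ and are standard. The main obstacle, and the place I would spend the most care, is the simultaneous bookkeeping of the three statistics through both decompositions: in particular, matching the shifted length convention of $K$ to the $M$-statistics $\|\alpha^{-1}(\alpha\vee\beta)\|$, $\|\beta^{-1}(\alpha\vee\beta)\|$ on the $K\to M$ side, and verifying that the Kreweras complement sends $\|\alpha\|$ to $(n-1)-\|\alpha\|$ in exactly the way that converts the $K$-indices into the plain lengths $\|\alpha'\|,\|\beta'\|$ of $I$ on the $I\to K$ side. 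Once this alignment is confirmed, reading $\mathcal F$ as the moment--cumulant map over $\mathbb{Q}[[Y,A,B]]$ yields both arrows at once.
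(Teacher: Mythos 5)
Your proposal is correct and matches the paper's own proof in all essentials: the arrow $K \stackrel{\mathcal F}\longmapsto M$ is obtained by decomposing pairs according to $\sigma = \alpha \vee \beta$ and using multiplicativity of the three statistics over the blocks of $\sigma$ (with the shifted lengths in the definition of $K$ providing exactly the alignment $a_V = (|V|-1)-\|\alpha_V\| = \|\alpha_V^{-1}(\alpha_V\vee\beta_V)\|$), and the arrow $I \stackrel{\mathcal F}\longmapsto K$ is obtained by first applying the Kreweras complement to convert the full-join condition with co-lengths into the zero-meet condition with plain lengths, and then decomposing by the join again. The only difference is presentational: you phrase the argument via set bijections and the interval factorization $[0_n,\gamma]\cong\prod_{V\in\gamma}NC(V)$, while the paper manipulates the weighted generating polynomials $e_n, f_n, g_n$ directly; the underlying mathematics is identical.
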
 
\begin{proof}
First, we have
\begin{align*}
g_{n}(Y,A,B) &
=  \sum_{\alpha,\beta \in NC(n)} Y^{\|\alpha^{-1}\beta\|} A^{\|\alpha^{-1}(\alpha \vee \beta)\|} B^{\|\beta^{-1}(\alpha \vee \beta)\|}\\
&=\sum_{\sigma \in NC(n)} \sum_{\substack{\alpha,\beta \in NC(n)\\\alpha \vee \beta = \sigma}} Y^{\|\alpha^{-1}\beta\|} A^{\|\alpha^{-1}\sigma\|} B^{\|\beta^{-1}\sigma\|}\\
&=\sum_{\sigma \in NC(n)} \prod_{c \in \sigma}
\sum_{\substack{\alpha,\beta \in NC(|c|)\\\alpha \vee \beta = 1_{|c|}}} Y^{\|\alpha^{-1}\beta\|} A^{\|\alpha^{-1}1_{|c|}\|} B^{\|\beta^{-1}1_{|c|}\|}
= \sum_{\sigma \in NC(n)} \prod_{c \in \sigma} f_{|c|} (Y,A,B)
\end{align*}
where $c \in \sigma$ is a cycle in $\sigma$ and $|c|$ is its length.  
The main idea here is that, for fixed $\sigma$, 
the functions $\|\alpha^{-1}\beta\|$, $\|\alpha^{-1}\sigma\|$ and $\|\beta^{-1}\sigma\|$ are multiplicative with respect to the cycles of $\sigma$
if $\alpha \vee \beta = \sigma$.
We have proved that  $\mathcal F: K \mapsto M$.

Second, we can apply a similar calculation to show $\mathcal F: I \mapsto K$, but this time we take the Kreweras complement:
\begin{align*}
f_{n}(Y,A,B) &
=  \sum_{\substack{\alpha,\beta \in NC(n)\\\alpha \vee \beta = 1_n}} 
Y^{\|\alpha^{-1}\beta\|} A^{\|\alpha^{-1} 1_n\|} B^{\|\beta^{-1}1_n\|}
=  \sum_{\substack{\alpha,\beta \in NC(n)\\\alpha \wedge \beta = 0_n}} 
Y^{\|\alpha^{-1}\beta\|} A^{\|\alpha\|} B^{\|\beta\|}\\
&=\sum_{\sigma \in NC(n)} \sum_{\substack{\alpha,\beta \in NC(n)\\ \alpha \wedge \beta = 0_n \\ \alpha \vee \beta = \sigma}} Y^{\|\alpha^{-1}\beta\|} A^{\|\alpha\|} B^{\|\beta\|}\\
&=\sum_{\sigma \in NC(n)} \prod_{c \in \sigma}
\sum_{\substack{\alpha,\beta \in NC(|c|)\\ \alpha \wedge \beta = 0_{|c|} \\  \alpha \vee \beta = 1_{|c|}}} Y^{\|\alpha^{-1}\beta\|} A^{\|\alpha\|} B^{\|\beta\|}
= \sum_{\sigma \in NC(n)} \prod_{c \in \sigma} e_{|c|} (Y,A,B)
\end{align*}
This completes the proof. 
\end{proof}

\begin{corollary} \label{cor:I-K-M}
We have
\begin{align}
K(X,Y,A,B) & = I (X(1+K(X,Y,A,B) ),Y,A,B)\\
M(X,Y,A,B) &= K (X(1+M(X,Y,A,B) ),Y,A,B)
\end{align}
\end{corollary}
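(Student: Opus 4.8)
The plan is to read off both functional equations directly from Theorem \ref{thm:M-I} together with the defining property \eqref{eq:fucntional-equation-R-M} of the $\mathcal F$-transform. Recall that $\mathcal F$ sends a cumulant series $R(z) = \sum_n \kappa_n z^n$ to the moment series $M(z) = \sum_n m_n z^n$ characterized by the moment-cumulant formula \eqref{eq:m-k}, or equivalently by the implicit relation $M(z) = R(z(1+M(z)))$. The crucial observation is that \eqref{eq:m-k} is a universal polynomial identity in the $\kappa_{|c|}$, so it --- and hence the equivalent functional equation --- remains valid when the moments and cumulants are taken in any commutative ring. Here I would take this ring to be the formal power series ring $\mathbb Z[[Y,A,B]]$, in which the quantities $e_n, f_n, g_n$ all live.

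First I would treat the second relation. Theorem \ref{thm:M-I} asserts $\{f_n\} \stackrel{\mathcal F}\longmapsto \{g_n\}$, which by definition means that, regarding $(f_n)_{n\geq1}$ as a cumulant sequence and $(g_n)_{n\geq1}$ as the associated moment sequence with coefficients in $\mathbb Z[[Y,A,B]]$, the corresponding generating functions satisfy \eqref{eq:fucntional-equation-R-M}. Taking $X$ as the formal variable playing the role of $z$, the cumulant series is $\sum_n f_n X^n = K(X,Y,A,B)$ by \eqref{eq:K}, and the moment series is $\sum_n g_n X^n = M(X,Y,A,B)$ by \eqref{eq:M}. Substituting into \eqref{eq:fucntional-equation-R-M} gives exactly
\begin{equation*}
M(X,Y,A,B) = K\bigl(X(1+M(X,Y,A,B)),Y,A,B\bigr).
\end{equation*}
The first relation is obtained in the identical way from the other half of Theorem \ref{thm:M-I}, namely $\{e_n\} \stackrel{\mathcal F}\longmapsto \{f_n\}$: now $(e_n)$ is the cumulant sequence with generating function $\sum_n e_n X^n = I(X,Y,A,B)$ and $(f_n)$ is the moment sequence with generating function $K(X,Y,A,B)$, so that \eqref{eq:fucntional-equation-R-M} yields
\begin{equation*}
K(X,Y,A,B) = I\bigl(X(1+K(X,Y,A,B)),Y,A,B\bigr).
\end{equation*}

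The only point requiring a little care --- and the nearest thing to an obstacle --- is to check that these compositions are legitimate operations on formal power series over $\mathbb Z[[Y,A,B]]$. This holds because $K$ and $M$ have no constant term in $X$ (both series \eqref{eq:K} and \eqref{eq:M} start at $n=1$), so $X(1+M)$ and $X(1+K)$ have $X$-valuation exactly $1$; substituting a series of positive $X$-valuation into $I$ or $K$, which themselves have $X$-valuation at least $1$, produces a well-defined formal power series, and the implicit equation \eqref{eq:fucntional-equation-R-M} determines the moment series uniquely from the cumulant series coefficient by coefficient in $X$. Everything else is a direct transcription of \eqref{eq:fucntional-equation-R-M}, so no further estimates or case analysis are needed.
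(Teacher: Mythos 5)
Your proposal is correct and is essentially the paper's own route: the corollary is stated there as an immediate consequence of Theorem \ref{thm:M-I}, since by the definition \eqref{eq:F-transform} the relation $\{f_n\} \stackrel{\mathcal F}{\longmapsto} \{g_n\}$ (resp.\ $\{e_n\} \stackrel{\mathcal F}{\longmapsto} \{f_n\}$) is, coefficient-wise, exactly the moment-cumulant formula \eqref{eq:m-k}, equivalently the functional equation \eqref{eq:fucntional-equation-R-M}, with coefficients in $\mathbb Z[[Y,A,B]]$. Your added checks --- that the identity \eqref{eq:m-k} is a universal polynomial identity valid over this coefficient ring, and that the compositions are legitimate because $K$ and $M$ have no constant term in $X$ --- are exactly the points the paper leaves implicit.
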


Before starting evaluating the functions $f_{r,a,b}$ and $g_{r,a,b}$ we introduce some notation:
for a polynomial $P(Y,A,B)$, 
\begin{itemize}
\item $[Y^rA^aB^b]P$ is the coefficient of $Y^rA^aB^b$ in $P(Y,A,B)$.
\item $\displaystyle D[r,a,b]P =  \frac{\partial^{r+a+b}}{\partial Y^r \partial A^a  \partial B^b} P(Y,A,B)$
\item $\displaystyle  D_0[r,a,b]P =  \left.\frac{\partial^{r+a+b}}{\partial Y^r \partial A^a  \partial B^b} \right|_{Y=A=B=0}P(Y,A,B)$
\end{itemize}
This means that 
\eq{
 D_0[r,a,b]P = r!a!b! \cdot [Y^rA^aB^b]P.
}

\begin{proposition}\label{prop:coefficients-K}
The coefficients of the formal power series $K(X,Y,A,B)$ are as follows:
\begin{itemize}
\item For $r=0$ we have 
\begin{align} \label{eq:coeff-K-0}
f_{0,0,0}(X)= \frac{X}{1-X}
\end{align}
 and $f_{0,a,b}(X)=0$ for $(a,b) \not = (0,0)$.
\item More generally,  when $(r,a,b)$ is compatible, we have
\begin{equation}\label{eq:coeff-K}
f_{r,a,b}(X) = \frac{X^{r+1}Q_{r,a,b}(X)}{(1-X)^{2r+1}},
\end{equation}
where $Q_{r,a,b}(X)$ is a polynomial of degree at most $r-\mathbf{1}_{r\geq 1}$, with integer coefficients. 
\end{itemize}
\end{proposition}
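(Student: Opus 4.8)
The plan is to prove the statement by strong induction on $r$, using the functional equation $K(X,Y,A,B) = I(X(1+K),Y,A,B)$ from Corollary \ref{cor:I-K-M}. Throughout I decompose every series by powers of $Y$: I write $K = \sum_{r\geq 0} K_r(X,A,B)\,Y^r$ and $I = \sum_{m\geq 0} I_m(X,A,B)\,Y^m$, so that $K_r = \sum_{a,b} f_{r,a,b}(X)\,A^aB^b$. It then suffices to show that, for each $r$, one has $K_r = X^{r+1}(1-X)^{-(2r+1)}\,\mathcal{Q}_r(X,A,B)$, where $\mathcal{Q}_r$ is a polynomial in $X$ of degree at most $r-\mathbf{1}_{r\geq 1}$ whose coefficients are polynomials in $A,B$ with integer coefficients; extracting $[A^aB^b]$ yields the claim for each $f_{r,a,b}$. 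The one structural input I record first is that $I_0(X) = X$ and, for $m\geq 1$, $I_m(X)$ is a polynomial in $X$ supported on the degrees $m+1,\dots,2m$ with non-negative integer coefficients (as functions of $A,B$). This is immediate from $I_m(X) = \sum_{a,b} e_{m,a,b}(X)\,A^aB^b$ together with the bound $m+1\leq n\leq 2m$ on the support of $e_{m,a,b}(X)=\sum_n |I_{n,m,a,b}|X^n$, which is condition (5) of compatibility in Proposition \ref{prop:compatible}.

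The base case $r=0$ I would handle directly: $\|\alpha^{-1}\beta\|=0$ forces $\alpha=\beta$, and then $\alpha\vee\beta = 1_n$ forces $\alpha=\beta=1_n$, so $K_{n,0,a,b}$ is a single point exactly when $a=b=0$. This gives $f_{0,0,0}(X)=\sum_{n\geq 1}X^n = X/(1-X)$ and $f_{0,a,b}=0$ otherwise, which is the first bullet (and fits the general form with $\mathcal{Q}_0 = 1$). For the inductive step I set $u = X(1+K)$ and extract $[Y^r]$ from $K = \sum_m I_m(u)\,Y^m$. Writing $u = u_0 + \delta$ with $u_0 = X(1+K_0) = X/(1-X)$ and $\delta = \sum_{s\geq 1}(XK_s)\,Y^s = O(Y)$, I Taylor-expand $I_m(u) = \sum_{k\geq 0}\tfrac{1}{k!}I_m^{(k)}(u_0)\,\delta^k$. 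The key point is that the $m=0$ term contributes $[Y^r]I_0(u) = [Y^r]u = XK_r$, which I move to the left, obtaining the recursion
\[
(1-X)\,K_r = I_r(u_0) + \sum_{m=1}^{r-1}\sum_{k\geq 1}\frac{1}{k!}\,I_m^{(k)}(u_0)\,[Y^{r-m}]\delta^k ,
\]
in which every summand on the right involves only $K_s$ with $s\leq r-1$, so the induction hypothesis applies.

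The heart of the argument, and the step I expect to be the main obstacle, is the bookkeeping of the $X$-order and the exponent of $(1-X)$ in each summand. Using $u_0 = X/(1-X)$, a monomial $X^j$ becomes $X^j/(1-X)^j$; since $I_m$ is supported on $m+1\leq j\leq 2m$, one finds $I_m^{(k)}(u_0) = N_{m,k}(X)/(1-X)^{2m-k}$ with $N_{m,k}$ divisible by $X^{\max(0,\,m+1-k)}$ and of degree at most $2m-k$. By the induction hypothesis $XK_s = X^{s+2}\tilde{\mathcal{Q}}_s(X)/(1-X)^{2s+1}$ with $\deg\tilde{\mathcal{Q}}_s\leq s-1$, so a typical summand of $[Y^{r-m}]\delta^k$ (indexed by $s_1+\cdots+s_k = r-m$, $s_i\geq 1$) has the form $X^{(r-m)+2k}(\cdots)/(1-X)^{2(r-m)+k}$ with polynomial factor of degree at most $(r-m)-k$. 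I would then check that multiplying these together with the prefactor $(1-X)^{-1}$ produces, in every case, a denominator of exactly $(1-X)^{2r+1}$ (the $\pm k$ and $\pm 2m$ cancel), a numerator divisible by $X^{r+1}$, and a numerator of total degree at most $2r$. Hence each summand equals $X^{r+1}(1-X)^{-(2r+1)}$ times a polynomial of degree at most $r-1$, and so does the finite sum, giving $K_r$ in the claimed form.

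Finally, integrality follows by tracking the constants that arise: the derivatives produce $\tfrac{1}{k!}\tfrac{j!}{(j-k)!} = \binom{j}{k}\in\mathbb{Z}$, the substitution $u_0^j = X^j(1-X)^{-j}$ expanded over the common denominator uses integer binomial coefficients, the coefficients of $I_m$ are integers by construction, and the $\tilde{\mathcal{Q}}_s$ are integral by the induction hypothesis. The delicate points that require genuine care are verifying the exact and uniform cancellation to $(1-X)^{2r+1}$ across all pairs $(m,k)$, and confirming the sharp degree bound $r-1$ (rather than a weaker bound); both come out of the arithmetic above, but only after one keeps careful simultaneous track of the lowest and highest powers of $X$ and the power of $(1-X)$ in each factor.
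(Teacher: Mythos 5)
Your proof is correct and follows essentially the same route as the paper's: strong induction driven by the functional equation $K = I(X(1+K),Y,A,B)$ of Corollary \ref{cor:I-K-M}, the base case $r=0$ computed directly, the linear contribution of $I_0$ (the $(0,0,0)$ term) moved to the left-hand side to produce the factor $(1-X)$, the support bound $m+1\leq n\leq 2m$ from Proposition \ref{prop:compatible} guaranteeing polynomiality, and the same bookkeeping of the powers of $X$ and $(1-X)$ to get the exponents $r+1$ and $2r+1$ and the degree bound $r-1$. The only differences are organizational: you induct on $r$ alone, carrying $A,B$ inside polynomial coefficients, and you Taylor-expand $I_m$ about the base point $u_0 = X/(1-X)$, whereas the paper inducts on the componentwise order on triples $(r,a,b)$ and expands $(1+K)^n$ multinomially over tuples of triples, the zero triples contributing exactly the $1/(1-X)$ factors that your expansion absorbs into the evaluation point $u_0$ --- the combinatorial content is identical.
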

\begin{proof}
Notice that $K_{n,0,a,b}$ is empty unless $(a,b)=(0,0)$, so that
$|K_{n,0,0,0}| =1$ for all $n \geq 1$.
Hence
\[
f_{0,0,0}(X) = X+ X^2 + X^3+ \ldots 
\]
which proves the first statement. 

For the induction step, we introduce the order relation $<$ on compatible triples naturally;
$(r_1,a_1,b_1) <(r_2,a_2,b_2)$ if $r_1 \leq r_2$, $a_1 \leq a_2$, $b_1 \leq b_2$ and $(r_1,a_1,b_1) \neq (r_2,a_2,b_2)$. 
We always assume that those triples are non-negative: $(r,a,b) \geq (0,0,0)$.
Let us assume now the conclusion holds for all triples $(r,a,b)<(r_0,a_0,b_0)$;
we only think of the case $r_0 \geq 1$ because of the compatibility condition.
We write $f=f_{r_0,a_0,b_0}$ and 
$K = K(X,Y,A,B)$ below.  Since 
\[
\sum_{n \geq 1} |I_{n,0,0,0}| (X(1+K))^n Y^rA^aB^b = X+XK
\]
we have by using Corollary \ref{cor:I-K-M}
\begin{align}
f(X) &= [Y^{r_0}A^{a_0}B^{b_0}] \left((X+XK) + \sum_{\substack{(r,a,b) \not =(0,0,0)}} \,
\sum_{n \geq 1} |I_{n,r,a,b}| \left( (X(1+K))^n Y^rA^aB^b \right) \right)\notag\\
&= Xf(X) + \sum_{\substack{(0,0,0) \\< (r,a,b) \leq\\ (r_0,a_0,b_0)}} \sum_{n=r+1}^{2r} |I_{n,r,a,b}| X^n [Y^{r_0-r}A^{a_0-a}B^{b_0-b}](1+K)^n,
\label{eq:find-f1}
\end{align}
where we have used the fact that $I_{n,r,a,b}$ is empty, unless $r+1 \leq n \leq 2r$, see Proposition \ref{prop:compatible}. 
Using the recurrence hypothesis we have
\begin{align}
[Y^{r_0-r}A^{a_0-a}B^{b_0-b}](1+K)^n 
&= \sum_{\substack{\sum_{i=1}^n (r_i,a_i,b_i) \\ =(r_0-r,a_0-a,b_0-b) } }
\prod_{i=1}^n \frac{X^{s(r_i)}Q_{r_i,a_i,b_i}(X)}{(1-X)^{2r_i+1}}\notag \\
&=  \frac{X^{r_0-r}}{(1-X)^{2(r_0-r)+n}} \cdot \hat Q_{r,a,b}(X)
\label{eq:find-f2}
\end{align}
because $(r_i,a_i,b_i) < (r_0,a_0,b_0)$ holds by the condition $(r,a,b) \not = (0,0,0)$.
Here, $s(r) = r+\mathbf{1}_{r \geq 1}$, where the indicator function comes from the fact that  $[Y^0A^0B^0](1+K) = 1/(1-X)$. 
Note that 
\eq{
\hat Q_{r,a,b} = \sum_{\substack{\sum_{i=1}^n (r_i,a_i,b_i) \\ =(r_0-r,a_0-a,b_0-b) } }\prod_{i=1}^n
X^{\mathbf{1}_{r_i \geq 1}}  \cdot Q_{r_i,a_i,b_i}
}
is a polynomial and moreover  
\begin{align}
\mathrm{deg} (\hat Q_{r,a,b}) \leq \sum_{i=1}^n   [\mathbf{1}_{r_i \geq 1} + r_i - \mathbf{1}_{r_i \geq 1}] = r_0 -r
\end{align}
The compatibility condition may give $0$ in \eqref{eq:find-f2}, but we do not treat such cases separately, because it does not make any difference. Hence, putting \eqref{eq:find-f1} and \eqref{eq:find-f2} together we have
\begin{align}
(1-X)^{2r_0+1} \cdot f(X) 
&=   \sum_{\substack{(0,0,0) \\< (r,a,b) \leq\\ (r_0,a_0,b_0)}} \sum_{n=r+1}^{2r} |I_{n,r,a,b}| X^n \cdot X^{r_0-r} (1-X)^{2r-n}\cdot \hat Q_{r,a,b} \\
&=  X^{r_0+1} \sum_{\substack{(0,0,0) \\< (r,a,b) \leq\\ (r_0,a_0,b_0)}} \sum_{n=r+1}^{2r} |I_{n,r,a,b}|  X^{n-r-1} (1-X)^{2r-n}\cdot \hat Q_{r,a,b}
\end{align}
To finish the proof, notice that the powers of $X$ and $1-X$ are both non-negative for $r+1 \leq n \leq 2n$, and moreover
\begin{align}
\mathrm{deg} \left( X^{n-r-1} (1-X)^{2r-n}\cdot \hat Q_{r,a,b} \right)
\leq (n-r-1)+(2r-n) + (r_0-r) = r_0-1
\end{align}
Note also that the polynomial $\hat Q_{r,a,b}$ has integer coefficients, so the same must hold for $Q_{r_0,a_0,b_0}$. This completes the proof.
\end{proof}

We prove now a similar result for $M(X,Y,A,B)$.
To this end, we need the following two results from classical multivariate analysis: the \emph{generalized Leibniz product rule}:
$$\frac{\partial^n}{\partial x_1 \partial x_2 \cdots \partial x_n}(uv) = \sum_{\emptyset \subseteq S \subseteq [n]} \frac{\partial^{|S|}u}{\prod_{i \in S} \partial x_i} \cdot \frac{\partial^{|S^c|}v}{\prod_{i \in S^c} \partial x_i} ,$$
and the \emph{generalized Fa\`a di Bruno chain rule}:
$$\frac{\partial^n}{\partial x_1 \partial x_2 \cdots \partial x_n}(u \circ v) = \sum_{\pi \in \Pi_n} u^{(\# \pi)}(v)  \cdot \prod_{\lambda \in \pi}\frac{\partial^{|\lambda|}v}{\prod_{i \in \lambda} \partial x_i},$$
where, in the first formula, the sum runs over all subsets $S$ of $[n] = \{1, 2, \ldots, n\}$ and, in the second formula, the sum runs over all (possibly crossing) partitions $\pi$ of $[n]$.  As before, we denote by $\# \pi$ the number of blocks of the partition $\pi$, and we write $\lambda \in \pi$ for a block $\lambda$ of $\pi$. 

In the following statement we show that, after a change of variables,  the generating series $M$ has a simple form. The change of variables is motivated by the $Y=A=B=0$ series: $W \mapsto W/(1+W)^2$ is the functional inverse of the function $g_{0,0,0}$ from \eqref{eq:g000}.

\begin{proposition}\label{prop:coefficients-M}
The coefficients of the formal power series $M(X,Y,A,B)$, after the change of variable 
$\tilde M(W,Y,A,B) = M(W/(1+W)^2,Y,A,B)$, are as follows:
by writing $\tilde g_{r,a,b} (W) = g_{r,a,b} (W/(1+W)^2)$,
\begin{itemize}
\item For $r=0$,  we have $\tilde g_{0,0,0} (W) = W$ and $\tilde g_{0,a,b} (W) = 0$ for $(a,b) \not = (0,0)$.
\item For $r \geq 1$ and any $a,b$ such that $(r,a,b)$ is compatible, 
\begin{equation}\label{eq:coeff-tilde-M}
\tilde g_{r,a,b} (W) = \frac{W^{r+1}(1+W)P_{r,a,b}(W)}{(1-W)^{2r-1}},
\end{equation}
for some polynomial $P_{r,a,b}$ of degree at most $3r-3$. 
\end{itemize}
\end{proposition}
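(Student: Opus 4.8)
The plan is to induct on the triples $(r,a,b)$ with respect to the partial order $<$ from Proposition \ref{prop:coefficients-K}, using the functional relation $M = K(X(1+M),Y,A,B)$ from Corollary \ref{cor:I-K-M} together with the explicit form of $K$ just obtained, working throughout in the variable $W$ via $X = W/(1+W)^2$. For the base case $r=0$, restricting $\mathcal F\colon K\mapsto M$ to $Y=A=B=0$ and using $f_{0,0,0}(X)=X/(1-X)$ yields $g_{0,0,0}=u_0/(1-u_0)$ with $u_0=X(1+g_{0,0,0})$, which rearranges to the fixed-point equation $g_{0,0,0}=X(1+g_{0,0,0})^2$; its functional inverse is precisely $X=W/(1+W)^2$, so $\tilde g_{0,0,0}(W)=W$, while $\tilde g_{0,a,b}=0$ for $(a,b)\neq(0,0)$ since $r=0$ forces $\alpha=\beta$ and hence $a=b=0$. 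Two evaluations produced by this change of variables are used repeatedly: the base argument $u_0:=X(1+g_{0,0,0})=W/(1+W)$, for which $1-u_0=(1+W)^{-1}$, and the self-energy $X f_{0,0,0}'(u_0)=X(1-u_0)^{-2}=W$.

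I then set up the recursion. Writing $M=g_{0,0,0}+N$ with $N=\sum_{(r',a',b')\neq 0}g_{r',a',b'}Y^{r'}A^{a'}B^{b'}$ and expanding $K(u_0+XN,Y,A,B)=\sum_{r',a',b'}f_{r',a',b'}(u_0+XN)Y^{r'}A^{a'}B^{b'}$ by Taylor's formula in $XN$ (this is exactly what the generalized Leibniz and Fa\`a di Bruno rules produce when $D_0[r,a,b]$ is applied to the composition: the outer derivatives hit $f_{r',a',b'}$ and the inner ones assemble products of lower $g$'s), I extract $[Y^rA^aB^b]$. The unique linear term from $(r',a',b')=(0,0,0)$ with one inner factor equal to $g_{r,a,b}$ contributes $X f_{0,0,0}'(u_0)\,g_{r,a,b}=W\,g_{r,a,b}$; moving it to the left gives
\[
(1-W)\,\tilde g_{r,a,b}(W)=f_{r,a,b}(u_0)+\Phi_{r,a,b}(W),
\]
where $\Phi_{r,a,b}$ is a finite sum, over $k\ge 1$, over $(r',a',b')$, and over $k$-tuples $(r_i,a_i,b_i)_{i=1}^k$ with all $r_i\ge 1$ and $\sum_i(r_i,a_i,b_i)=(r-r',a-a',b-b')$, of $\tfrac1{k!}f_{r',a',b'}^{(k)}(u_0)\,X^k\prod_{i=1}^k\tilde g_{r_i,a_i,b_i}$, with the one self-energy term deleted. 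A short check shows every factor $\tilde g_{r_i,a_i,b_i}$ here has $(r_i,a_i,b_i)<(r,a,b)$, so the induction is well founded and each factor may be replaced by its inductive form $W^{r_i+1}(1+W)P_{r_i,a_i,b_i}/(1-W)^{2r_i-1}$.

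The core of the argument is then a set of bookkeeping estimates on the right-hand side, using $f_{r',a',b'}(X)=X^{r'+1}Q_{r',a',b'}(X)/(1-X)^{2r'+1}$ with $\deg Q_{r',a',b'}\le r'-\mathbf 1_{r'\ge1}$ from Proposition \ref{prop:coefficients-K}. Substituting $u_0=W/(1+W)$ gives $f_{r,a,b}(u_0)=W^{r+1}(1+W)^{r}Q_{r,a,b}\!\big(W/(1+W)\big)=W^{r+1}(1+W)\cdot(\text{polynomial of degree}\le r-1)$, and a Leibniz expansion of the derivatives shows $\tfrac1{k!}f_{r',a',b'}^{(k)}(u_0)=(1+W)^{k+1}S_{r',a',b',k}(W)$ with $S$ a polynomial of degree $\le 2r'$ and order $\ge\max(r'+1-k,0)$ at $W=0$, in particular regular at $W=\pm1$. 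Using $\deg_\infty F := \deg(\mathrm{num})-\deg(\mathrm{denom})$ for a rational function, I track for each term of $\Phi_{r,a,b}$: (i) the net power of $(1+W)$, which is $(k+1)-2k+k=+1$, so the poles at $W=-1$ carried by $X^k$ cancel and every term is divisible by $(1+W)$; (ii) the order at $W=0$, which is at least $r+2$, strictly above the order $r+1$ of the base term, so $W^{r+1}$ divides the whole right-hand side; (iii) the pole order at $W=1$, namely $\sum_i(2r_i-1)=2(r-r')-k$, maximized at $2r-2$ (attained for $r'=0,k=2$); and (iv) the degree at infinity, which after adding the contributions telescopes to exactly $2r+1$, independent of $r',k$ and of the tuple, matching $\deg_\infty f_{r,a,b}(u_0)$. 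Combining (i)--(iv), the right-hand side is $W^{r+1}(1+W)$ times a rational function with a single pole at $W=1$ of order $\le 2r-2$ and $\deg_\infty\le 2r$; dividing by $(1-W)$ gives $\tilde g_{r,a,b}=W^{r+1}(1+W)P_{r,a,b}(W)/(1-W)^{2r-1}$ with $\deg P_{r,a,b}=\deg_\infty\tilde g_{r,a,b}+r-3\le 3r-3$.

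I expect the main obstacle to be estimates (i) and (iv) together. Individually the dressed-cumulant factors $\tfrac1{k!}f^{(k)}_{r',a',b'}(u_0)$ and the powers $X^k=W^k/(1+W)^{2k}$ carry negative powers of $(1+W)$, and only after multiplying by the $(1+W)^k$ supplied by the $k$ inductive factors does the pole at $W=-1$ disappear; this cancellation is what guarantees that $P_{r,a,b}$ is a genuine polynomial rather than merely a rational function. The degree-at-infinity computation is equally delicate, as it hinges on the sharp inductive bounds $\deg Q_{r',a',b'}\le r'-1$ and $\deg P_{r_i,a_i,b_i}\le 3r_i-3$ conspiring so that every term lands at exactly $2r+1$; any slack in these bounds would be amplified through the product over the $k$ factors and would break the degree estimate.
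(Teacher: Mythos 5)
Your proof is correct and follows essentially the same route as the paper's: induction over the partial order on triples, the functional equation $M = K(X(1+M),Y,A,B)$ from Corollary \ref{cor:I-K-M}, the substitution $X=W/(1+W)^2$ (under which the inner argument becomes $W/(1+W)$ and the self-energy coefficient becomes $W$, yielding the factor $(1-W)$ on the left), and pole/degree bookkeeping of the remaining terms via the form of $K$ from Proposition \ref{prop:coefficients-K}. Your Taylor-expansion phrasing and the four-item accounting (order at $W=0$, poles at $W=\pm1$, degree at infinity) are a repackaging of the paper's generalized Leibniz/Fa\`a di Bruno computation and its explicit polynomials $\hat P_\pi$, $\hat{\hat P}$, $\tilde P_{r,a,b}$.
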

\begin{proof}
Notice that the set $M_{n,0,a,b}$ is empty unless $(a,b)=(0,0)$, so that
$|M_{n,0,0,0}| =\mathrm{Cat}_n$ for all $n \geq 1$.
This means that $1+ g_{0,0,0}(X)$ is the generating function of Catalan numbers so that
\begin{equation}\label{eq:g000}
g_{0,0,0}(X) = \frac{1-\sqrt{1-4X}}{2X}-1 
\end{equation}
By replacing $X$ by $W/(1+W)^2$ we can prove the first statement. 

We shall prove the general case by recurrence, as we did in Proposition \ref{prop:coefficients-K}. 
To do so, fix $(r_0,a_0,b_0)$ with $r_0 \geq 1$, 
and let us assume that the expression \eqref{eq:coeff-tilde-M} holds for all (compatible) triples $(r,a,b)$ with $(r,a,b) < (r_0,a_0,b_0)$.
Then, we can write, separating the case $(r,a,b)=(0,0,0)$,
$$K(X,Y,A,B) = f_{0,0,0}(X) + \sum_{(r,a,b) \neq (0,0,0)} f_{r,a,b}(X) \cdot Y^rA^aB^b,$$ 
so that, via Corollary \ref{cor:I-K-M}, we get
\begin{equation}\label{eq:tilde-M-K-main}
\tilde M =\underbrace{ f_{0,0,0} \left( \frac{W}{(1+W)^2}(1+\tilde M)\right)}_{\clubsuit} 
+  \sum_{(r,a,b) \neq (0,0,0)} \underbrace{ f_{r,a,b}\left[\frac{W}{(1+W)^2}(1+\tilde M) \right] Y^rA^aB^b}_{\spadesuit(r,a,b)}.
\end{equation}
Here and below we write $\tilde M = \tilde M (W,Y,A,B)$.
In order to find the coefficient $\tilde g_{r_0,a_0,b_0} (W) =[Y^{r_0}A^{a_0}B^{b_0}]\tilde M$, 
we shall take the derivative  of  \eqref{eq:tilde-M-K-main}, and \emph{then} set $Y=A=B=0$:
\eq{\label{eq:PD-M}
D_0[r_0,a_0,b_0] \tilde M = (r_0!)(a_0!)(b_0!) \cdot \tilde g_{r_0,a_0,b_0} (W)
}
First, let us deal with the first term of the RHS of \eqref{eq:tilde-M-K-main}: $\clubsuit$. 
Since $f_{0,0,0}(X) = X/(1-X)$ via  Proposition \ref{prop:coefficients-K}, the $p$-th derivative is given by 
\begin{align}\label{eq:D-f}
\forall p \geq 1, \qquad f_{0,0,0}^{(p)}(X) =\frac{ p! }{(1-X)^{p+1}}.
\end{align}
Using the Fa\`a di Bruno formula, the derivative of $\clubsuit$ reads, 
\begin{align} 
D[r_0,a_0,b_0] \, \clubsuit &=\sum_{\pi \in \Pi_{r_0+a_0+b_0}} f_{000}^{(\#\pi)}\left[\frac{W}{(1+W)^2}(1+\tilde M) \right]
\cdot \prod_{\lambda \in \pi}
D[\lambda_1,\lambda_2,\lambda_3]  \frac{W}{(1+W)^2}(1+\tilde M)\\
&= \sum_{\pi \in \Pi_{r_0+a_0+b_0}} f_{000}^{(\#\pi)}\left[\frac{W}{(1+W)^2}(1+\tilde M) \right]
\cdot \left( \frac{W}{(1+W)^2} \right)^{\# \pi}
\cdot \prod_{\lambda \in \pi}
D[\lambda_1,\lambda_2,\lambda_3]  \tilde M
\end{align}
Here, for a block $\lambda \in \pi$, 
we let $\lambda_1$ (resp.~$\lambda_2$ and $\lambda_3$) be the number of indices $i \in \lambda$  such that 
$x_i = Y$ (resp.~$x_i = A$ and $x_i = B$)  in the sense that we regard $\pi$ as a partition of
\eq{
(x_i) = (\underbrace{Y,\ldots,Y}_{r_0},\underbrace{A,\ldots,A}_{a_0},\underbrace{B,\ldots,B}_{b_0})
}
To calculate further we use the induction hypothesis. For $(\lambda_1,\lambda_2,\lambda_3) < (r_0,a_0,b_0)$ we have
\begin{equation}\label{eq:partial-derivative-tilde-M}
D_0[\lambda_1,\lambda_2,\lambda_3] \tilde M
= \lambda_1!\lambda_2!\lambda_3! \cdot \frac{W^{\lambda_1+1}(1+W)P_{\lambda_1, \lambda_2, \lambda_3}(W)}{(1-W)^{2\lambda_1-1}}
\end{equation}
Note that this vanishes based on the first statement of the current proposition (the compatibility condition),
but this fact will not be used. 
Also, the polynomial $P_{\lambda_1, \lambda_2, \lambda_3}$ is of degree at most $3\lambda_1 - 3$. 
Therefore, for $\pi \not = {1}_{r_0+a_0+b_0}$ by the induction hypothesis we have
\begin{align}\label{eq:FB-part}
\prod_{\lambda \in \pi} 
D_0[\lambda_1,\lambda_2,\lambda_3] \tilde M
=  \frac{W^{r_0 + \# \pi}(1+W)^{\# \pi}}{(1-W)^{2r_0 - \# \pi}}\cdot  \hat P_\pi (W) 
\end{align}
If $\#\pi >r_0$ this quantity vanishes  because one of the blocks of $\pi$ must give $0$ in \eqref{eq:partial-derivative-tilde-M},
but this fact does not make any difference in the current paper. More importantly, 
\begin{align}
\mathrm{deg} (\hat P_\pi) \leq 3 r_0 - 3\# \pi 
\end{align}
On the other hand, for $\pi  = {1}_{r_0+a_0+b_0}$ we have $(r_0!)(a_0!)(b_0!) \cdot \tilde g_{r_0,a_0,b_0} (W)$ instead.

Now we are ready to analyze $D_0 [r_0,a_0,b_0] \, \clubsuit$.
Since $\tilde M (W,0,0,0) = W$ from the first claim, by using \eqref{eq:D-f} we have
\begin{align}
f_{000}^{(\#\pi)}\left[\frac{W}{(1+W)^2}(1+\tilde M) \right] = (\# \pi)! \cdot (1+W)^{\#\pi +1}
\end{align} 
Here, we used the following equality:
\eqb{\label{eq:variable-f}
\frac{W}{(1+W)^2} (1+\tilde M(W,0,0,0)) = \frac{W}{1+W} 
}
Then, 
\eq{\label{eq:club-estimate}
D_0 [r_0,a_0,b_0] \, \clubsuit 
&=(r_0!)(a_0!)(b_0!)W \cdot \tilde g_{r_0,a_0,b_0} (W) +
\sum_{\substack{\pi \in \Pi_{r_0+a_0+b_0}\\ \pi \not = {1}_{r_0+a_0+b_0}}} 
(\# \pi)!  \cdot \frac{ W^{r_0 + 2 \# \pi} (1+W) }{(1-W)^{2r_0- \#\pi}} \cdot  \hat P_\pi (W)\notag\\
&= (r_0!)(a_0!)(b_0!)W \cdot \tilde g_{r_0,a_0,b_0} (W) +
\frac{W^{r_0+1}(1+W)}{(1-W)^{2_0-2}} \cdot \hat{\hat P} (W)
}
where  $\# \pi  \geq 2$ and
\begin{align}
\hat{\hat P}(W) = \sum_{\substack{\pi \in \Pi_{r_0+a_0+b_0}\\ \pi \not = {1}_{r_0+a_0+b_0}}} 
(\# \pi)! \cdot W^{2 \# \pi-1}(1-W)^{ \#\pi - 2} \cdot  \hat P_\pi (W)
\end{align}
This is a polynomial because the powers in the above formula are all non-negative, and moreover,
\begin{align}
\mathrm{deg} (\hat{\hat P}) \leq (2 \# \pi-1) + (\#\pi - 2) + (3 r_0 - 3\# \pi ) = 3r_0 - 3.
\end{align}

\medskip

Let us now focus on the second term in \eqref{eq:tilde-M-K-main}: $\spadesuit(r,a,b)$.
For fixed $(r,a,b)$ and $(r_0,a_0,b_0)$
\begin{align*}
D_0[r_0,a_0,b_0]\, \spadesuit(r,a,b)& = \sum_{r'=0}^{r_0}\sum_{a'=0}^{a_0}\sum_{b'=0}^{b_0} 
c_{r'a'b'}  \cdot \left(D_0[r',a',b'] \,  f_{rab}\left[\frac{W}{(1+W)^2}(1+\tilde M) \right]  \right) \\
&\hspace{40mm}\cdot \left( D_0[r_0-r',a_0-a',b_0-b'] (Y^r,A^a,B^b) \right) \\
& =c_{r_0-r,a_0-r,b_0-b}  \cdot D_0[r_0-r,a_0-a,b_0-b] \,  f_{r,a,b}\left[\frac{W}{(1+W)^2}(1+\tilde M) \right] 
\end{align*}
where $c_{r',a',b'}$ is some combinatorial factor.
Here, we used the generalized Leibniz product rule. 

To continue our calculation, let us first compute the derivative (in $X$) of the function $f_{r,a,b}$ for $r \geq 1$ in \eqref{eq:coeff-K}. 
For an arbitrary order of derivation $p \geq 1$, we have
$$f_{r,a,b}^{(p)}(X) = \sum_{p = s+t+u} X^{r+1-s}(1-X)^{-2r-1-t}Q_{r,a,b,s,t,u}(X),$$
where $Q_{r,a,b,s,t,u}$ is a polynomial of degree at most $r-1-u$, 
which also incorporates the combinatorial factors obtained from the derivation of the powers of $X$ and $1-X$. 
Note that $f_{rab}^{(p)}$ vanishes unless $s \leq r+1$ and $u \leq r-1$.

Then, by using generalized Fa\`a di Bruno chain rule together with \eqref{eq:variable-f} and \eqref{eq:FB-part} (replacing $r_0$ by $r_0-r$) 
we have, with $l= r_0+a_0+b_0 - (r+a+b)$,
\eqb{
D_0&[r_0-r,a_0-a,b_0-b] \, f_{r,a,b}\left[\frac{W}{(1+W)^2}(1+\tilde M) \right] \\
&= \sum_{\pi \in \Pi_l}  f_{r,a,b}^{(\#\pi)}\left[\frac{W}{(1+W)^2}(1+\tilde M) \right] \cdot  \prod_{\lambda \in \pi} D[\lambda_1,\lambda_2,\lambda_3]  \frac{W}{(1+W)^2}(1+\tilde M) \\
&= \sum_{\pi \in \Pi_l}  \sum_{\# \pi = s+t+u} \left( \frac{W}{1+W} \right)^{r+1-s} \left( \frac{1}{1+W} \right)^{-2r-1-t} \\
&\hspace{2cm} \cdot Q_{r,a,b,s,t,u}\left( \frac{W}{1+W} \right)
\cdot \left( \frac {W}{(1+W)^2} \right)^{\#\pi}
\cdot \frac{W^{r_0-r + \# \pi}(1+W)^{\# \pi}}{(1-W)^{2r_0-2r - \# \pi}} \hat P_\pi (W) \\
&=\frac{W^{r_0+1}(1+W)}{(1-W)^{2r_0 - 2}} \cdot \tilde P_{r,a,b} (W)
}
where
\eq{
\tilde P_{r,a,b} (W)=  \sum_{\pi \in \Pi_l}  \sum_{\# \pi = s+t+u} W^{2\# \pi -s}{(1-W)^{\# \pi+ 2r  -2}}\cdot R_{r,a,b,s,t,u}(W) \cdot \hat P_\pi (W)
}
with 
\eq{
R_{r,a,b,s,t,u}(W) = (1+W)^{r-1-u}\cdot Q_{r,a,b,s,t,u}\left( \frac{W}{1+W} \right).
} 
Note that $R_{r,a,b,s,t,u}$ is a polynomial of degree at most $r-1-u$.
Hence $\tilde P_{r,a,b}$ is also a polynomial such that 
\eqb{
\mathrm{deg} (\tilde P_{r,a,b})& \leq (2\# \pi -s)  + (\# \pi +2r -2) +(3r_0-3r-3\#\pi)+(r-1-u)  \\
&=3r_0-3-s-u \leq 3r_0 -3.
}
Therefore
\eq{\label{eq:spade-estimate}
 \sum_{(r,a,b) \neq (0,0,0)}  D_0[r_0,a_0,b_0]\, \spadesuit(r,a,b) 
 =\frac{W^{r_0+1}(1+W)}{(1-W)^{2r_0 - 2}} \cdot \tilde{\tilde P} (W)
}
with the degree of $ \tilde{\tilde P}$ at most $3r_0-3$.

Putting \eqref{eq:PD-M}, \eqref{eq:club-estimate} and \eqref{eq:spade-estimate} together, 
\eqref{eq:tilde-M-K-main} gives the following:
\eq{
(r_0!)(a_0!)(b_0!) \cdot (1-W)\cdot \tilde g_{r_0,a_0,b_0} (W) 
= \frac{W^{r_0+1}(1+W)}{(1-W)^{2r_0 - 2}} \cdot\left[\hat{\hat P}  (W)+\tilde{\tilde P} (W) \right]
}
This completes the proof. 
\end{proof}

\begin{remark}\label{rem:integer-coeff-M}
We believe that the polynomials $P_{r,a,b}$ have integer coefficients, and in fact this is the case for $r \leq 6$ (see Section \ref{sec:numerics}). 
Perhaps one could try and show this by a more careful analysis of the combinatorial factors appearing in the proof.
\end{remark}

From the previous proposition, our main theorem states as:
\begin{theorem}\label{thm:meanders-generating-function}
For any fixed $r \geq 1$ there exists a polynomial $\tilde P_r$ of degree at most $3r-3$ such that the generating function of the number of meanders on $2n$ points with $n-r$ loops 
$$F_r(t) = \sum_{n=r+1}^\infty M_n^{(n-r)}t^n,$$
with the change of variables $t=w/(1+w)^2$, reads
\begin{equation}\label{eq:F-r-w}
F_r(t) = \sum_{n=r}^\infty M_n^{(n-r)} \frac{w^n}{(1+w)^{2n}} = \frac{w^{r+1}(1+w)}{(1-w)^{2r-1}} \tilde P_r(w).
\end{equation}
\end{theorem}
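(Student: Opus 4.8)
The plan is to derive Theorem \ref{thm:meanders-generating-function} as an immediate consequence of Proposition \ref{prop:coefficients-M} by specializing the auxiliary variables $A$ and $B$ to $1$. The key observation is that the generating function $F_r$ we want is precisely the coefficient of $Y^r$ in $M(X,Y,1,1)$, as recorded in Section \ref{sec:M-I}: indeed $F_r(X) = [Y^r]M(X,Y,1,1) = \sum_{a,b} g_{r,a,b}(X)$. So the entire content of the theorem is to package the per-$(a,b)$ statements of Proposition \ref{prop:coefficients-M} into a single statement about the sum over $a$ and $b$.

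First I would perform the change of variables $t = w/(1+w)^2$ and write $\tilde F_r(w) = F_r(w/(1+w)^2)$. Since this substitution commutes with the finite sum over $a,b$, I have
\eq{
\tilde F_r(w) = \sum_{a,b} \tilde g_{r,a,b}(w).
}
Now Proposition \ref{prop:coefficients-M} tells me that for each compatible triple $(r,a,b)$ with $r \geq 1$,
$$
\tilde g_{r,a,b}(w) = \frac{w^{r+1}(1+w)P_{r,a,b}(w)}{(1-w)^{2r-1}},
$$
where $\deg P_{r,a,b} \leq 3r-3$, and $\tilde g_{r,a,b}$ vanishes when $(r,a,b)$ is not compatible. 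The crucial point is that the prefactor $w^{r+1}(1+w)/(1-w)^{2r-1}$ does not depend on $a$ or $b$ — only $r$ appears. Therefore I can factor it out of the sum and define
$$
\tilde P_r(w) := \sum_{a,b}P_{r,a,b}(w),
$$
a finite sum because by the compatibility conditions of Definition \ref{def:compatible} the indices satisfy $a,b \leq \max(2r-2,1)$, so only finitely many triples $(r,a,b)$ contribute for fixed $r$. Each summand has degree at most $3r-3$, hence $\deg \tilde P_r \leq 3r-3$, giving exactly the claimed form \eqref{eq:F-r-w}.

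There is essentially no hard step here: the entire proof is a bookkeeping exercise that reduces to summing the already-established formula over the finitely many admissible $(a,b)$. The one point requiring a word of care is confirming that setting $A=B=1$ in the generating function is legitimate and that the summation over $a,b$ is finite — both of which follow from the compatibility constraints controlling the ranges of $a$ and $b$. I would simply note that the finiteness guarantees $\tilde P_r$ is a genuine polynomial (rather than a formal series in $w$ with unbounded degree), and that the degree bound is preserved under the finite sum. With this, the theorem follows directly from Proposition \ref{prop:coefficients-M}.
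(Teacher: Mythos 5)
Your proof is correct and follows essentially the same route as the paper's own argument: the paper likewise deduces the theorem directly from Proposition \ref{prop:coefficients-M} by setting $A=B=1$ and defining $\tilde P_r = \sum_{a,b \,:\, (r,a,b)\text{ compatible}} P_{r,a,b}$. Your additional remarks on the finiteness of the sum (via the compatibility bound $a,b \leq \max(2r-2,1)$) and on the degree bound being preserved under the finite sum merely make explicit what the paper leaves implicit.
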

\begin{proof}
The claim follows directly from Proposition \ref{prop:coefficients-M}, by setting $A=B=1$ and writing
$$\tilde P_r = \sum_{a,b \, : \, (r,a,b) \text{ compatible}} P_{r,a,b}.$$
\end{proof}
\begin{remark}\label{rem:rel-conj}
One can obtain $M_n^{(n-r)}$, the number of meandric systems on $2n$ points with $n-r$ loops, from the series $F_r$, 
after a change of variables and a series expansion. It would be interesting to relate the explicit form of $F_r$ from \eqref{eq:F-r-w} to the conjecture from \cite[Equation (2.4)]{difrancesco1997meander}.
\end{remark}

As a corollary of the formula \eqref{eq:F-r-w} for the generating series, we obtain the asymptotic behavior of the meandric numbers $M_n^{(n-r)}$. 

\begin{corollary}\label{cor:asympt}
For any fixed $r \geq 1$, assuming that $\tilde P_r(1) \neq 0$,  the number of meandric systems on $2n$ points having $n-r$ loops has the following asymptotic behavior as $n \to \infty$:
\begin{equation}
 M_n^{(n-r)} \sim \frac{\tilde P_r(1)}{2^{2r-2}\Gamma((2r-1)/2)}4^n n^{(2r-3)/2}.
 \end{equation}
 \end{corollary}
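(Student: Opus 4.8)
The plan is to start from the closed form for the generating series established in Theorem \ref{thm:meanders-generating-function}, namely
\begin{equation*}
F_r(t) = \frac{w^{r+1}(1+w)}{(1-w)^{2r-1}} \tilde P_r(w), \qquad t = \frac{w}{(1+w)^2},
\end{equation*}
and extract the asymptotics of the coefficients $M_n^{(n-r)} = [t^n]F_r(t)$ by singularity analysis. The first step is to locate the dominant singularity of $F_r$ as a function of $t$. The change of variables $t = w/(1+w)^2$ is the standard Catalan substitution: it maps $w=1$ to $t = 1/4$, and the inverse branch $w(t)$ that is analytic at $t=0$ with $w(0)=0$ has its nearest singularity to the origin exactly at $t=1/4$, where $w \to 1$. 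Since the prefactor $w^{r+1}(1+w)\tilde P_r(w)$ is analytic and nonzero at $w=1$ (using the hypothesis $\tilde P_r(1)\neq 0$), the singularity of $F_r$ at $t=1/4$ comes entirely from the factor $(1-w)^{-(2r-1)}$, and is of square-root type in $t$.

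The second step is to make the square-root singularity explicit. Near $w=1$ I would write $w = 1-\varepsilon$ and expand $t = w/(1+w)^2$; one finds $1/4 - t \sim \tfrac{1}{16}\varepsilon^2$, hence $1-w = \varepsilon \sim 4\sqrt{1/4 - t} = 2\sqrt{1-4t}$. Substituting into the closed form, the leading singular behavior as $t \to 1/4$ is
\begin{equation*}
F_r(t) \sim \frac{1\cdot 2 \cdot \tilde P_r(1)}{\bigl(2\sqrt{1-4t}\bigr)^{2r-1}} = \frac{\tilde P_r(1)}{2^{2r-2}}\,(1-4t)^{-(2r-1)/2}.
\end{equation*}
Here I have used $w^{r+1}\to 1$ and $(1+w)\to 2$ at $w=1$; the constant $2$ from $1+w$ combines with the $2^{2r-1}$ from $\varepsilon^{2r-1}$ to give the stated $2^{2r-2}$ in the denominator.

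The third step is to apply the standard transfer theorem from singularity analysis (as in Flajolet--Sedgewick): for a function behaving like $(1-4t)^{-\sigma}$ near its dominant singularity $t=1/4$, with $\sigma = (2r-1)/2$, the coefficients satisfy
\begin{equation*}
[t^n]\,(1-4t)^{-\sigma} \sim \frac{4^n\, n^{\sigma-1}}{\Gamma(\sigma)} = \frac{4^n\, n^{(2r-3)/2}}{\Gamma((2r-1)/2)}.
\end{equation*}
Multiplying by the constant $\tilde P_r(1)/2^{2r-2}$ yields exactly the claimed asymptotic. One should check that $t=1/4$ is the unique dominant singularity (i.e.\ that no other value of $t$ on $|t|=1/4$ gives $w$ on the unit circle with a pole of $(1-w)^{-(2r-1)}$ or a singularity of the analytic branch $w(t)$); this follows because the analytic inverse branch $w(t)$ is singular only at $t=1/4$ and maps the disk $|t|<1/4$ into $|w|<1$, so $1-w$ does not vanish elsewhere on the relevant domain.

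The main obstacle is not the computation but the justification of the transfer: one must verify that $F_r$ extends analytically to a Delta-domain (a slit disk) around $t=1/4$ so that the transfer theorem applies, and that the correction terms beyond the leading $(1-4t)^{-(2r-1)/2}$ are of strictly smaller order and do not affect the stated leading asymptotic. Both hold here because $w(t)$ is analytic in a slit neighborhood of $1/4$ and the singular expansion of $F_r$ in powers of $(1-4t)^{1/2}$ has a genuine leading term with nonzero coefficient precisely under the hypothesis $\tilde P_r(1)\neq 0$; the subleading terms contribute $4^n n^{(2r-5)/2}$ and below, which are negligible.
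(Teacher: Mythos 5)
Your proposal is correct and follows essentially the same route as the paper: both arguments start from the closed form of $F_r$ in Theorem \ref{thm:meanders-generating-function}, expand the Catalan-type inverse branch near $t=1/4$ to get $1-w \sim 2\sqrt{1-4t}$, identify the singular behavior $F_r(t) \sim \frac{\tilde P_r(1)}{2^{2r-2}}(1-4t)^{-(2r-1)/2}$, and conclude via the Flajolet--Sedgewick transfer theorem (the paper's ``Camambert region'' is exactly your Delta-domain justification). The constants match the paper's computation exactly, so there is nothing to correct.
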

 \begin{proof}
 The generating function $F_r$ from \eqref{eq:F-r-w} is analytic on $\mathbb C \setminus [1/4, \infty)$, hence the exponential growth of the meandric numbers is $4^n$, see \cite[Theorem IV.7]{flajolet2009analytic}. For the more precise statement, we use the transfer results from \cite[Section VI]{flajolet2009analytic}. Note that the behavior of $w(x)$ at $x \to 1/4$ and of $F_r(w)$ at $w \to 1$ are given respectively by
 \begin{align*}
 w(x) &\sim 1- 4 \sqrt{1/4-x}\\
 F_r(w) &\sim 2\tilde P_r(1) (1-w)^{1-2r}.
 \end{align*}
 Hence, in a ``Camambert region'' with the opening at $x=1/4$, we have the following equivalent when $x \to 1/4$
 $$F_r(x) \sim \frac{2 \tilde P_r(1)}{4^{2r-1}}(1/4-x)^{-(2r-1)/2}.$$
 By \cite[Theorem VI.1]{flajolet2009analytic}, it follows that
 $$M_n^{(n-r)} \sim  \frac{2 \tilde P_r(1)}{4^{2r-1}} \frac{4^{(2r-1)/2}}{\Gamma((2r-1)/2)} 4^n n^{(2r-3)/2},$$
 which is the announced result.
 \end{proof}
 
 \bigskip

We end this section with some comments relating our approach to the meander generating series with previous results. 
Lando and Zvonkin were the first ones to study irreducible meandric systems \cite{lando1993plane}. 
They show that the following formal equality holds:
\begin{equation}\label{eq:LZ}
B(x) = N(xB^2(x)),
\end{equation}
where $B(x)$ is the generating series for the square Catalan numbers $B(x) = \sum_{n=0}^\infty \mathrm{Cat}_n^2 x^n$ and 
$N$ is the generating series for irreducible meandric systems. In our notation, $N(X) = 1+I(X)$ and $B(X) = 1+M(X)$.
Then, our formula gives theirs by setting $Y=A=B=1$ where we just split the reduction in two steps. 
Indeed, the series $I,K,M$ are related by the relations
\begin{align}
\label{eq:I-to-K} K(X) &= I(X(1+K(X)))\\
\label{eq:K-to-M} M(X) &= K(X(1+M(X))).
\end{align}
Plugging \eqref{eq:I-to-K} into \eqref{eq:K-to-M}, we get
\begin{align*}
M(X) &=  K(X(1+M(X)))\\
&= I \left[ X(1+M(X)) \cdot \left( 1 + K(X(1+M(X))) \right) \right]\\
&= I \left[ X(1+M(X))^2\right],
\end{align*}
which is precisely \eqref{eq:LZ}. Similarly, the computations in \cite[Remark 4.5]{nica2016free} can be shown to be equivalent to the special case $Y=A=B=1$ of Theorem \ref{thm:M-I} in a similar straightforward fashion. 
Although the derivations in \cite{lando1993plane,nica2016free} seem simpler, since they only require one implicit functional equation to be solved, 
the quadratic term appearing in \eqref{eq:LZ} makes this equation more complicated to deal with.

\section{Exact formulas for small values of \texorpdfstring{$r$}{r}}\label{sec:numerics}

We gather in this section the formulas for the generating functions of the numbers of meandric systems on $n$ points with $n-r$ loops, for small values of $r$ ($r \leq 6$). Let us emphasize that our method could be used, in principle, to obtain the generating functions for all (fixed, but arbitrarily large) values of $r$; we are limited by the following computational tasks:
\begin{enumerate}
\item computing the number of irreducible meandric systems of type $(r,a,b)$
\item performing the formal power series inversion in \eqref{eq:I-to-K}, \eqref{eq:K-to-M}.
\end{enumerate}
We have implemented the above computational steps and automated the computation of the generating functions. First, a \textsc{C} program computes all the irreducible  meandric systems of size $p$, storing the results for later use. Then, a symbolic \textsc{Mathematica} function computes automatically the generating function (for given $r$), using the irreducible meandric system data. All the software is available at \cite{num}. We think that our crude computer implementation could be optimized to reach larger values of $r$. 

Finally, let us once more make the observation that our method is not well suited to tackle the (most important) problem of enumerating connected meanders. Connected meanders correspond to taking $r=n-1$, while in our method $r$ is a fixed parameter which is not allowed to grow with $n$. 

\begin{proposition}
The polynomials $\tilde P_r$ appearing in the generating function \eqref{eq:F-r-w} for meandric systems on $2n$ points having $n-r$ loops are as follows
\begin{align*}
\tilde P_1(w) &= 2\\
\tilde P_2(w) &= 4 w^3-12 w^2+4 w+8\\
\tilde P_3(w) &= 18 w^6-92 w^5+134 w^4+8 w^3-146 w^2+52 w+42\\
\tilde P_4(w) &= 112 w^9-770 w^8+1864 w^7-1344 w^6-1656 w^5+3052 w^4-520 w^3-1440 w^2+520 w+262\\
\tilde P_5(w) &= 820 w^{12}-7052 w^{11}+23264 w^{10}-31788 w^9-3108 w^8+60568 w^7-54912 w^6-16808 w^5\\&\quad+48012 w^4-11660 w^3-13664 w^2+4948 w+1828\\
\tilde P_6(w) &= 6632 w^{15}-68322 w^{14}+283820 w^{13}-558256 w^{12}+311016 w^{11}+798210 w^{10}-1587476 w^9\\&\quad+556540 w^8+1213592 w^7-1278814 w^6-76668 w^5+652408 w^4-181480 w^3-129026 w^2\\&\quad+46692 w+13820.
\end{align*}
\end{proposition}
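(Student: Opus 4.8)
The plan is to compute the polynomials $\tilde P_r$ explicitly by executing the recursive procedure laid out in Propositions \ref{prop:coefficients-K} and \ref{prop:coefficients-M}, rather than by proving a closed-form identity. The statement is really an assertion of the output of a finite computation, so the ``proof'' is a matter of carrying out the algorithm correctly for each $r \in \{1,\ldots,6\}$ and verifying the results. First I would, for each target value of $r$, enumerate all compatible triples $(r,a,b)$ using Definition \ref{def:compatible}, and for each such triple count the irreducible meandric systems $|I_{n,r,a,b}|$ for the relevant range $r+1 \leq n \leq 2r$ (using Proposition \ref{prop:compatible} to restrict $n$). This enumeration is exactly the \textsc{C} computation referenced in the text: one generates all pairs $(\alpha,\beta)\in NC(n)^2$, filters by $\alpha\wedge\beta=0_n$, $\alpha\vee\beta=1_n$, and the length statistics, and tabulates the counts. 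These counts are the input data $e_{r,a,b}(X)$.

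Next I would perform the two formal power-series inversions of Corollary \ref{cor:I-K-M}. Using Theorem \ref{thm:M-I}, $I \stackrel{\mathcal F}\longmapsto K \stackrel{\mathcal F}\longmapsto M$, I would first solve $K(X)=I(X(1+K(X)))$ to obtain the $f_{r,a,b}$, whose closed form is guaranteed by Proposition \ref{prop:coefficients-K} to be $X^{r+1}Q_{r,a,b}(X)/(1-X)^{2r+1}$ with $Q_{r,a,b}$ of degree at most $r-\mathbf 1_{r\geq 1}$; this bounds the number of Taylor coefficients one must match to pin down $Q_{r,a,b}$ exactly. Then I would solve $M(X)=K(X(1+M(X)))$, extract $g_{r,a,b}$, apply the change of variables $X=W/(1+W)^2$, and read off $\tilde g_{r,a,b}(W)$ in the form \eqref{eq:coeff-tilde-M} guaranteed by Proposition \ref{prop:coefficients-M}, namely $W^{r+1}(1+W)P_{r,a,b}(W)/(1-W)^{2r-1}$ with $\deg P_{r,a,b}\leq 3r-3$. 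Since these degree bounds are known a priori, each $P_{r,a,b}$ is determined by finitely many series coefficients, so the inversions can be truncated at a safe order and the rational form fitted exactly. Finally I would set $A=B=1$ and sum, $\tilde P_r = \sum_{(r,a,b)\text{ compatible}} P_{r,a,b}$, as in the proof of Theorem \ref{thm:meanders-generating-function}, to obtain each displayed polynomial.

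The main obstacle is not conceptual but computational and bookkeeping-intensive: the enumeration of irreducible meandric systems grows rapidly, and at $r=6$ one must handle $NC(n)^2$ for $n$ up to $12$, so the honest difficulty lies in generating and filtering these pairs reliably and in propagating the four-variable power series through two nested functional inversions without arithmetic error. A secondary subtlety is ensuring that the truncation order of the series is large enough to determine each $\tilde P_r$ uniquely; here the degree bound $3r-3$ from Proposition \ref{prop:coefficients-M} (together with the explicit denominator $(1-W)^{2r-1}$ and numerator factors) tells one precisely how many coefficients of $\tilde g_{r,a,b}$ suffice, so correctness reduces to matching that many coefficients and then confirming the fitted rational function reproduces further coefficients as a consistency check. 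Because the whole procedure is a deterministic finite computation with guaranteed output shape, I would present the proof as the verification that running the documented algorithm (implemented in \cite{num}) yields exactly the listed polynomials, supplementing it with the degree and denominator certificates from the two preceding propositions to justify that the finite computation determines the answer.
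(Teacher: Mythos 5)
Your proposal is correct and follows essentially the same route as the paper: the authors likewise treat the proposition as the output of a finite, certified computation, enumerating the irreducible meandric systems with a \textsc{C} program, pushing the data through the two $\mathcal F$-transform inversions of Corollary \ref{cor:I-K-M} in a computer algebra system, and relying on the denominator and degree bounds of Propositions \ref{prop:coefficients-K} and \ref{prop:coefficients-M} to know the rational form is pinned down by finitely many coefficients before summing $\tilde P_r = \sum_{(r,a,b)} P_{r,a,b}$ at $A=B=1$. Your added remarks on choosing a safe truncation order and cross-checking extra coefficients are sensible refinements of, not departures from, the paper's argument.
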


Note that all the polynomials above have even integer coefficients. Although we have not proved this fact (see Remark \ref{rem:integer-coeff-M}), the factor $2$ appearing in front of each coefficient of $\tilde P$ has a simple interpretation: for every $r \geq 1$, for each pair $(\alpha, \beta)$ contributing to the series $M$, there is the pair $(\beta, \alpha) \neq (\alpha, \beta)$ which also contributes. 

Plugging these values above into Corollary \ref{cor:asympt}, we obtain the exact asymptotic behavior of meandric numbers for $r \leq 6$. 

\begin{corollary}\label{cor:asympt-precise-6}
The first $6$ series of meandric numbers $M_n^{(n-r)}$ have the following asymptotic behavior as $n \to \infty$
\begin{align*}
M_n^{(n-1)} &\sim \frac{2}{\sqrt \pi} 4^n n^{-1/2}\\
M_n^{(n-2)} &\sim \frac{2}{\sqrt \pi} 4^n n^{1/2}\\
M_n^{(n-3)} &\sim \frac{4}{3\sqrt \pi} 4^n n^{3/2}\\
M_n^{(n-4)} &\sim \frac{2}{3\sqrt \pi} 4^n n^{5/2}\\
M_n^{(n-5)} &\sim \frac{4}{15\sqrt \pi} 4^n n^{7/2}\\
M_n^{(n-6)} &\sim \frac{4}{45\sqrt \pi} 4^n n^{9/2}.
\end{align*}
\end{corollary}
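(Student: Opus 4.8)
The plan is to specialize the general asymptotic formula from Corollary \ref{cor:asympt} to each value $r \in \{1,2,3,4,5,6\}$, feeding it the explicit polynomials $\tilde P_r$ supplied by the preceding Proposition. Since Corollary \ref{cor:asympt} already delivers
$$M_n^{(n-r)} \sim \frac{\tilde P_r(1)}{2^{2r-2}\,\Gamma((2r-1)/2)}\,4^n\, n^{(2r-3)/2}$$
whenever $\tilde P_r(1) \neq 0$, the whole statement collapses to three elementary tasks: evaluating $\tilde P_r(1)$, confirming that it does not vanish, and simplifying the resulting constant.

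First I would evaluate each polynomial at $w=1$ by summing its coefficients, obtaining $\tilde P_r(1) = 2,\,4,\,16,\,80,\,448,\,2688$ for $r=1,\dots,6$ respectively. In particular every value is strictly positive, so the non-vanishing hypothesis of Corollary \ref{cor:asympt} is met in all six cases and the asymptotic formula applies verbatim. The exponent of $n$ is immediate: $(2r-3)/2$ yields $-\tfrac12,\tfrac12,\tfrac32,\tfrac52,\tfrac72,\tfrac92$, matching the claimed powers.

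Next I would insert the half-integer values of the Gamma function, using $\Gamma(1/2)=\sqrt\pi$ together with the recursion $\Gamma(s+1)=s\,\Gamma(s)$, which give $\Gamma((2r-1)/2) = \sqrt\pi,\ \tfrac12\sqrt\pi,\ \tfrac34\sqrt\pi,\ \tfrac{15}{8}\sqrt\pi,\ \tfrac{105}{16}\sqrt\pi,\ \tfrac{945}{32}\sqrt\pi$. Combining these with $2^{2r-2}=1,4,16,64,256,1024$ and the values of $\tilde P_r(1)$ above, the multiplicative constants simplify to $\tfrac{2}{\sqrt\pi},\ \tfrac{2}{\sqrt\pi},\ \tfrac{4}{3\sqrt\pi},\ \tfrac{2}{3\sqrt\pi},\ \tfrac{4}{15\sqrt\pi},\ \tfrac{4}{45\sqrt\pi}$, exactly as stated.

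There is no genuine obstacle at this level: the corollary is a direct specialization of the two results it invokes, and the only real care needed is arithmetic bookkeeping of the cancellations (for instance $448/1680 = 4/15$ when $r=5$ and $2688/30240 = 4/45$ when $r=6$). If one insisted on a fully self-contained statement, the \emph{hard part} would instead lie in the input Proposition itself, namely actually producing the polynomials $\tilde P_r$ through the recursive series-inversion machinery of Proposition \ref{prop:coefficients-M}; but that computation is precisely the content we are entitled to assume here.
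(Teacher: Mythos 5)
Your proposal is correct and is exactly the paper's own argument: the paper deduces Corollary \ref{cor:asympt-precise-6} by plugging the values $\tilde P_r(1)$ (which indeed equal $2, 4, 16, 80, 448, 2688$ for $r=1,\dots,6$) into the general asymptotic formula of Corollary \ref{cor:asympt}. Your arithmetic with the half-integer Gamma values and the powers $2^{2r-2}$ checks out, so nothing is missing.
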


\bibliography{ref}{}
\bibliographystyle{alpha}

\end{document}